\newtheorem{example}{Example}
\newtheorem{remark}{Remark}
\newtheorem{lemma}{Lemma}
\newtheorem{theorem}{Theorem}
\newtheorem{corollary}{Corollary}
\newcommand{\R}{{\mathbb R}}
\newcommand{\Z}{{\mathbb Z}}
\newcommand{\eqdef}{\overset{\text{def}}{=}}
\newcommand{\E}{\mathbb E}
\newcommand{\X}{X^N}
\newcommand{\Zm}{Z^N}
\newcommand{\hl}{h_{\ell}}
\newcommand{\Ql}{\widehat{Q}}
\title{Complexity of Multilevel Monte Carlo Tau-Leaping}
\author{
David F. Anderson\thanks{Department of Mathematics, University of
  Wisconsin, Madison, USA.  anderson@math.wisc.edu.},
\and
Desmond J. Higham\thanks{Department of Mathematics and
 Statistics, University of Strathclyde, UK.
d.j.higham@maths.strath.ac.uk.},
  \and
 Yu Sun\thanks{Department of Mathematics, University of
  Wisconsin, Madison, USA.  ysun@math.wisc.edu.}
}
\begin{document}

\maketitle

\begin{abstract}
Tau-leaping is a popular discretization method for generating approximate paths of continuous time, discrete space, Markov chains, notably for biochemical reaction systems.  To compute expected values in this context, an appropriate multilevel Monte Carlo form of tau-leaping has been shown to improve efficiency dramatically.  In this work we derive new analytic results concerning the computational complexity of multilevel Monte Carlo tau-leaping that are significantly sharper than previous ones.   We avoid taking asymptotic limits, and focus on a practical setting where the system size is large enough for many events to take place along a path, so that exact simulation of paths is expensive, making  tau-leaping an attractive option.  We use a  general scaling of the system components that allows for the reaction rate constants and the abundances of species to vary over several orders of magnitude, and we exploit the random time change representation developed by Kurtz.  The key feature of the analysis that allows for the sharper bounds is that when comparing relevant pairs of processes we analyze the variance of their difference directly rather than bounding via the second moment. Use of the second  moment is natural in the setting of a diffusion equation, where multilevel was first developed and where strong convergence results for numerical methods are readily available, but is not optimal for the Poisson-driven jump systems that we consider here. We also present computational results that illustrate the new analysis.
\end{abstract}

\section{Introduction}
\label{sec:int}

Many modeling scenarios give rise to continuous-time, discrete-space Markov chains.
Notable application areas include chemistry, systems biology, epidemiology, population dynamics,
queuing theory and several branches of physics
\cite{Gardiner2002,GHP13,Renshaw2011,Ross2006,Wilkinson2011}.
It is straightforward to simulate sample paths for this class of processes,
but in many realistic contexts the computational cost of performing Monte Carlo is prohibitive.
This work focuses on the commonly arising task of computing an expected value of some
feature of the solution, for example the mean level of a chemical species at some
specified time or the correlation between two population levels.
We study the method proposed in  \cite{AndHigh2012}, which
combined ideas from tau-leaping  and multilevel Monte Carlo  in a
manner that can dramatically improve computational complexity. Our main aim is to provide
further analytical support for the method in the form of substantially sharper bounds for the variances of the coupled processes, and hence for the overall computational complexity of the resulting multilevel Monte Carlo estimator.  The sharper analytic bounds will naturally inform implementation.

Tau-leaping, proposed by Gillespie \cite{Gill2001}, is an Euler-style discretization technique
for generating paths that approximate those of the underlying process.
Given a stepsize, $h$,
the method proceeds by freezing the state of the system over a subinterval of length $h$ and
then updating with an approximate summary of the events that would have taken place.
Intuitively, this approach is attractive when
(a) many events occur over each subinterval, so that exact simulation
would be expensive,  and
(b) the relative change in the system state is small over each subinterval, so that the discretization
is accurate \cite{Anderson2007b,Gill2001,Rathinam2005}.
However, in our case, where the aim is to combine sample paths in order to approximate an expected value,
it should be kept in mind that
 \begin{itemize}
 \item we are concerned with the overall accuracy of the expected value approximation, not
  the accuracy of the individual paths,
 \item forming a Monte Carlo style sample average automatically introduces a statistical sampling error, so we should focus on carefully balancing sampling and discretization effects.
\end{itemize}
In the case of simulating diffusion processes,
these two points were highlighted in the multilevel Monte Carlo approach of Giles \cite{Giles2008},
with related earlier work by Heinrich \cite{Heinrich2001},
 which delivers remarkable improvements in
computational complexity in comparison with standard Monte Carlo.
A key ingredient of multilevel Monte Carlo is to combine paths at different discretization
levels, using relatively fewer paths at the more expensive resolution scales.
The accuracy of the final estimate relies on a recursive control variate construction,
exploiting the fact that pairs of paths
at neighboring resolution levels can be made to have a low variance.

The multilevel philosophy was adapted in \cite{AndHigh2012}  for the case of tau-leaping. This required a novel
simulation approach to generate tightly coupled pairs of Poisson-driven
paths at neighboring resolutions.
The resulting multilevel Monte Carlo tau-leaping algorithm is straightforward to implement and,
unlike the standard multi-level methods utilized in the diffusion case, can be made unbiased by using an exact simulator at the most refined level.  (We note, however, that the recent work of Rhee and Glynn produces an unbiased estimator in the diffusive setting utilizing a randomization idea \cite{RG2012}.)
Computational complexity  was analysed in \cite{AndHigh2012}  for a generic system scaling, and
experimental results confirmed the potential of the method.

 Our aim here is to refine the complexity analysis of \cite{AndHigh2012}.
We do this by directly estimating the variance between relevant pairs of processes,
rather than bounding via the second moment.
Second moment bounds arise naturally in the setting of a
diffusion equation, where multilevel was first developed and where strong convergence results
for numerical methods are readily available, but they are not optimal for the Poisson-driven jump systems
that we consider here.

The next section sets up the notation, defines the simulation methods and discusses some
issues that arise in developing realistic results.
Section~\ref{sec:res}
presents the new analytical results. The consequent bounds on
computational complexity are derived in Section~\ref{sec:complexity}.
Section~\ref{sec:comp} then provides some numerical confirmation of the findings and
Section~\ref{sec:conc} gives conclusions.
Some technical results required  for the analysis are
collected in Appendix~\ref{sec:app}.

\section{Background and Notation}
\label{sec:bn}

 For concreteness, we  follow \cite{AndHigh2012} by using the language of chemical kinetics.  We consider a system with $d$ constituent species, $\{S_1,\dots,S_d\}$, taking part in $K<\infty$ reactions, with the $k$th such reaction written as
\begin{equation*}
	\sum_{i=1}^d \nu_{ki} S_i \to \sum_{i=1}^d \nu_{ki}' S_i.
\end{equation*}
Here,
$\nu_{ki}\in \Z_{\ge 0}$ specifies the number of molecules of type $S_i$ required as input for the $k$th reaction, and $\nu_{ki}' \in \Z_{\ge 0}$ specifies the number of molecules of type $S_i$ produced during the $k$th reaction.  If the $k$th reaction occurs at time $t$, the system is updated via
\begin{equation*}
	X(t) = X(t-) + \nu_{k}' - \nu_k,
\end{equation*}
where $\nu_k$ and $\nu_{k}'$ are  the vectors whose $i$th components are $\nu_{ki}$ and $\nu_{ki}'$, respectively, and $X_i(t)$ denotes the number of $S_i$ molecules at time $t$.
For notational compactness, we let
\[
   \zeta_k \eqdef \nu_k' - \nu_k,
   \]
    which is commonly termed the {\em reaction vector} for the $k$th reaction.   Then  $X(t)$ satisfies
\begin{equation*}
	X(t) = X(0) + \sum_{k=1}^K R_k(t) \zeta_k,
\end{equation*}
where we have enumerated over the reactions, and $R_k(t)$ is the number of times the $k$th reaction has occurred by time $t$.  We typically drop the $K$ from the summation when this will not lead to confusion.

By far the most widely used stochastic model for this system assumes the existence of an
\emph{intensity}, or \emph{propensity},  function $\lambda_k:\R^d \to \R_{\ge 0}$ for the $k$th reaction so that
\begin{equation*}
	R_k(t) = Y_k\left(\int_0^t \lambda_k(X(s)) ds \right),
\end{equation*}
where the collection $\{Y_k\}$ are independent unit-rate Poisson processes.  See, for example, \cite{Anderson2007a,AndKurtz2011,Kurtz72,Kurtz80}. Thus, $X(t)$ is the solution to the stochastic equation
\begin{equation}\label{eq:main}
	X(t) = X(0) + \sum_k Y_k\left( \int_0^t \lambda_k(X(s)) ds\right) \zeta_k.
\end{equation}
The standard choice for the intensity functions $\lambda_k$ is that of mass-action kinetics, which assumes that for $x \in \Z^d_{\ge 0}$
\begin{equation}
	\lambda_k(x) =\kappa_k \prod_{i = 1}^d \frac{x_i!}{(x_i - \nu_{ki})!}1_{\{x_i \ge \nu_{ki}\}}.
	\label{eq:stoch_massAction}
\end{equation}
We note that the continuous-time
Markov chain
(\ref{eq:main}) is equivalent to the model derived by
Gillespie
\cite{Gill76,Gill77}. 
The corresponding forward
Kolmogorov equation
is often referred to as the
Chemical Master Equation \cite{Wilkinson2011}, and solving this very large ODE system forms 
the basis of an 
alternative computational approach 
 that is appropriate when detailed information is required about
the distribution of the solution process
\cite{Jahnke11,MacN2008a}.

In analyzing computational complexity, it is important
to account for the fact that simulation becomes expensive when many reactions take place
along a path, so some sort of ``large system parameter'' is required.
Following
\cite{AndHigh2012,Ball06},
we
denote this parameter by $N$ and
scale the model by setting $X_i^N = N^{-\alpha_i}X_i$, where $\alpha_i \ge 0$ is chosen so that $X^N_i = O(1)$.  The scaled variable then satisfies 
\[
	X_i^N(t) = X_i^N(0) + \sum_k Y_k\left( \int_0^t \lambda_k(X(s)) ds\right) \zeta_{ki}^N,
\]
where $\zeta_{ki}^N = N^{-\alpha_i}\zeta_{ki}$.  
Also following \cite{AndHigh2012,Ball06}, we  set
\begin{equation*}
	\rho_{k} \eqdef  \min\{ \alpha_i\ : \ \zeta^N_{ki} \ne 0 \},
\end{equation*}
so that  $\zeta_{ki}^N = O(N^{-\rho_k})$, with the order achieved for at least one component of $\zeta_k^N$ (i.e. there can be $j$ for which $\zeta_{kj}^N = o(N^{-\rho_k})$).   We also set $\rho \eqdef \min\{\rho_k\} \ge 0$.  

Accounting for the fact that the rate parameters of \eqref{eq:stoch_massAction}, $\kappa_k$, may also vary over several orders of magnitude, for each $k$ there is an $r_k$  for which $\lambda_k(X) = N^{r_k}\lambda_k^N(X^N)$ so that $\lambda_k^N(X^N)$ is $O(1)$.  This yields the stochastic equation
\[
	X^N(t) = X^N(0) + \sum_k Y_k \left(\int_0^t N^{r_k} \lambda_k^N(X^N(s)) ds \right) \zeta_k^N.
\]
 Define now
\[
	\gamma \eqdef \max_k \{r_k-\rho_k\},
\]
which is the natural time-scale for the model.  For $\gamma>0$ the shortest timescale in the problem is much smaller than 1,
and for $\gamma < 0$ it is much larger.  Our
 analysis will be relevant to the
case where $\gamma \le 0$.

Setting $r_k = \gamma + c_k$, we arrive at the stochastic equation for our general scaled model,
\begin{equation}
	X^N(t) = X^N(0) + \sum_k Y_k\left( N^{\gamma} \int_0^t N^{c_k} \lambda_k^N(X^N(s)) ds\right)\zeta_k^N.
	\label{eq:main_multi}
\end{equation}
 Note that, by construction, $c_k \le \rho_k$.
We  may now regard
\begin{equation}
	\overline{N} = N^{\gamma}\sum_k N^{c_k}
\label{eq:Nbar}
\end{equation}
as an order of magnitude for the number of computations required to generate a single path using an exact algorithm.
We refer to
\cite{AndHigh2012,Ball06} for further details and illustrative examples pertaining to the scaling, and
point out that the \emph{classical scaling}
is covered in this framework  by taking
$c_k \equiv \rho_k \equiv 1$ and $\gamma = 0$, see \cite{AndersonGangulyKurtz,Kurtz72}.

Given a stepsize, $h$,
the tau-leaping approximation for the scaled system (\ref{eq:main_multi})
   takes the form
\begin{equation}
	Z_h^N(t) = Z_h^N(0) + \sum_k Y_k \left( N^{\gamma} \int_0^t N^{c_k} \lambda_k^N(Z^N_h ( \eta(s)))
    ds  \right)\zeta_k^N,
\label{eq:RTC_tau_scaled}
\end{equation}
where
$\displaystyle \eta(s) \eqdef \left \lfloor \frac{s}{h} \right \rfloor h$.
Note that we have defined the process over continuous time.

The multilevel tau-leaping method for approximating
$\E[ f(X^N(T))]$
uses
levels $\ell  = \ell_0,\ell_0 + 1,\dots,L$, where both $\ell_0$ and  $L$ are to be determined.
The characteristic stepsize at level $\ell$ is given by $\hl = T \cdot M^{-\ell}$, for a fixed positive integer $M$, typically between 2 and 7.
Using $Z^N_{\ell}$ to denote the tau-leaping process \eqref{eq:RTC_tau_scaled} generated with a step-size of  $h_{\ell}$, we consider the telescoping sum identity
\begin{align}\label{eq:add_sub_real}
  \E [f(Z^N_{L}(T))] = \E [ f(Z^N_{\ell_0}(T)) ] + \sum_{\ell = \ell_0+1}^L \E[ f(Z^N_{\ell}(T)) - f(Z^N_{\ell - 1}(T))].
\end{align}
Exploiting the right-hand side of this identity, we introduce the estimators
\begin{align}
	\Ql_{\ell_0} &\eqdef \frac{1}{n_{0}} \sum_{i = 1}^{n_{0}} f(Z^N_{\ell_0,[i]}(T)), \quad \text{and} \quad
	\Ql_{\ell} \eqdef \frac{1}{n_{\ell}} \sum_{i = 1}^{n_{\ell}} ( f(Z^N_{\ell,[i]}(T)) - f(Z^N_{\ell - 1,[i]}(T))),
	\label{eq:QL_real}
\end{align}
where the pairs $Z^N_{\ell,[i]}$ and $Z^N_{\ell - 1,[i]}$ are to be generated in such a way that $\mathsf{Var}(\Ql_{\ell})$ is small.    We will then let
\begin{equation}
  \Ql_{B} \ \eqdef \   \sum_{\ell = \ell_0}^L \Ql_{\ell},
  \label{eq:estimator_real}
\end{equation}
be the unbiased estimator for $\E[f(Z^N_{L}(T))]$.  Note that $\Ql_{B}$ is therefore a biased estimator for the
 quantity $\E [f(X^N(T))]$.

We emphasize at this stage that the number of levels, $L - \ell_0 + 1$,
and the number of paths per level,
$n_{0}$ and $n_{\ell}$, have not yet been specified, and will be determined by the
required accuracy.

In a similar manner, we may exploit the fact that exact simulation is possible
and use the identity
\begin{equation*}
	\E[ f(X^N(T))] = \E [f(X^N(T)) - f(Z^N_L(T))] + \sum_{\ell = \ell_0+1}^L \E [ f(Z^N_{\ell}) - f(Z^N_{\ell-1})] + \E [f(Z^N_{\ell_0}(T))].
\end{equation*}
We then define estimators for the three types of terms on the right-hand side via
(\ref{eq:QL_real}) and
\begin{equation}
\Ql_{E} \eqdef \frac{1}{n_{E}} \sum_{i = 1}^{n_{E}} (f(X^N_{[i]}(T) - f(Z^N_{L,[i]}(T))) ,
\label{ub3}
\end{equation}
leading to
\begin{equation}
	\Ql_{UB}  \eqdef  \Ql_E +  \sum_{\ell = \ell_0}^L \Ql_{\ell},
	\label{eq:unbiased_MLMC}
\end{equation}
which
is  an \emph{unbiased} estimator for $\E [f(X^N(T))]$.

In the multilevel framework, rather than single paths, we generally compute pairs of paths, and tight
coupling is the key to success \cite{AK2014}.  Defining $a\wedge b \eqdef \min\{a,b\}$, the pair $(Z^N_{\ell},Z^N_{\ell-1})$  appearing in
(\ref{eq:QL_real})
is defined as the solution to the stochastic equation
\begin{align}
\begin{split}
	\Zm_{\ell}(t)  &= \Zm_{\ell}(0) + \sum_{k} \zeta_k^N \bigg[Y_{k,1}\left(N^{\gamma}N^{c_k}  \int_0^t \lambda_k^N(\Zm_{\ell} ( \eta_{\ell}(s))) \wedge \lambda_k^N(\Zm_{\ell-1} ( \eta_{\ell-1}(s))) ds \right) \\
	& +  Y_{k,2}\left(N^{\gamma}N^{c_k} \int_0^t [ \lambda_k^N(\Zm_{\ell}( \eta_{\ell}(s))) -  \lambda_k(\Zm_{\ell} ( \eta_{\ell}(s))) \wedge \lambda_k(\Zm_{\ell-1} ( \eta_{\ell-1}(s)))] ds \right)\bigg],  \label{eq:Z1}
    \end{split}\\
    \begin{split}
	\Zm_{\ell-1}(t) &= \Zm_{\ell-1}(0) + \sum_{k} \zeta_k^N \bigg[Y_{k,1}\left( N^{\gamma}N^{c_k}  \int_0^t \lambda_k(\Zm_{\ell} ( \eta_{\ell}(s))) \wedge \lambda_k(\Zm_{\ell-1} ( \eta_{\ell-1}(s)))  ds  \right)\\
	& +  Y_{k,3}\left( N^{\gamma}N^{c_k} \int_0^t  [\lambda_k(\Zm_{\ell-1}( \eta_{\ell-1}(s))) -  \lambda_k(\Zm_{\ell} ( \eta_{\ell}(s))) \wedge \lambda_k(\Zm_{\ell-1} ( \eta_{\ell-1}(s))) ]ds \right)\bigg],
    \label{eq:Z2}
    \end{split}	
\end{align}
 where the $Y_{k,i},\ i \in \{1,2,3\}$, are independent,
unit-rate Poisson processes, and for each $\ell$,
we define
$\eta_{\ell}(s) \eqdef \lfloor s/h_{\ell}\rfloor h_{\ell}$.

Similarly, for
(\ref{ub3}),
    the pair $(X^N,Z^N_{L})$ is defined as the solution to the stochastic equation
\begin{align}
\begin{split}
	\X(t) = &X^N(0) + \sum_{k} Y_{k,1}\left( N^{\gamma}N^{c_k} \int_0^t  \lambda_k^N(\X(s))  \wedge  \lambda_k^N(\Zm_{L}( \eta_{L}(s)))  ds  \right)\zeta_k^N\\
	&\hspace{.1in} + \sum_{k} Y_{k,2}\left( N^{\gamma}N^{c_k} \int_0^t [ \lambda_k^N(\X(s))  -  \lambda_k^N(\X(s))  \wedge   \lambda_k^N(\Zm_{L}( \eta_{L}(s))) ] ds \right)\zeta_k^N,
    \label{eq:Z_X1}
\end{split}\\
\begin{split}
	\Zm_{L}(t) = &\Zm_{L}(0) + \sum_{k} Y_{k,1}\left(N^{\gamma}N^{c_k}  \int_0^t \lambda_k^N(\X(s))  \wedge   \lambda_k^N(\Zm_{L}( \eta_{L}(s)))  ds \right)\zeta_k^N \\
	&\hspace{.1in} + \sum_{k} Y_{k,3}\left(N^{\gamma}N^{c_k}  \int_0^t  [\lambda_k^N(\Zm_{L}( \eta_{L}(s)))  - \lambda_k^N(\X(s))  \wedge  \lambda_k^N(\Zm_{L}( \eta_{L}(s)))] ds  \right)\zeta_k^N.
      \label{eq:Z_X2}
    \end{split}	
\end{align}

In practice, both the biased and the unbiased multilevel Monte Carlo methods
described above can be implemented straightforwardly;
full pseudo-code descriptions are given in
 \cite{AndHigh2012}.
In this setting, the exact
paths $X^N$ are essentially being simulated by
the \emph{next reaction method}
\cite{Gibson2000}, which has a natural
relation to the random change of time representation
(\ref{eq:main}); see \cite{Anderson2007a}.

To analyze the computational complexity of these methods, we assume that
$\E[f(X^N(T))]$ is required to an accuracy of $\epsilon$, in the sense of a
confidence interval.
Hence, we have in mind that
$\epsilon$ is small.
As discussed earlier, we also wish to regard the system
parameter, $N$, as large. It is important, however, to keep in mind that
we have a `moving target.'
As $N$ increases the properties of the underlying system may vary.
In particular, for the classical scaling, in the
 \emph{thermodynamic limit}
 $N \to \infty$ the stochastic fluctuations become negligible
\cite{AndKurtz2011,Kurtz72} and the system approaches a deterministic
ODE.
We are implicitly assuming that the problem is specified in a regime where fluctuations are of interest
and the task is computationally challenging, so we regard
$\epsilon$ as small and
$N$ as large without committing ourselves to asymptotic limits.
A key aim in our analysis is to capture the effect that the
prescribed task can become less costly as $N$ increases, in the sense that fluctuations decay.

The level-wise estimators
$ \Ql_{\ell}$
and
$\Ql_E$
appearing in
(\ref{eq:estimator_real})
and
(\ref{eq:unbiased_MLMC})
are independent, and hence their variances add.
To make the overall variance achieve the desired value of
$O(\epsilon^2)$, our aim is to choose the number of paths,
$n_{\ell}$,
so that each level contributes equally to the overall variance.
The goal, in terms of obtaining a good upper bound on the computational complexity of the
method, is therefore to develop tight bounds on
the variances of
$ f(Z^N_{\ell,[i]}(T)) - f(Z^N_{\ell - 1,[i]}(T)) $
and
$f(X^N_{[i]}(T) - f(Z^N_{L,[i]}(T))$.

We note that multilevel was first developed and analyzed for diffusion processes
\cite{Giles2008}. Here the
classical and well-studied concept of
$L^2$ strong error of a numerical method
can be used.
Two processes that are both close to the underlying exact process in  the $L^2$ norm must also be close
to each other, and the second moment trivially bounds the variance.
This approach has proved useful
in connection
with
Brownian motion,
\cite{Giles2007,GHM2009,HMSY2013},
and is optimal in the sense that complexity results can be derived that
match the residual $O(\epsilon^{-2})$ cost that would remain if a simulable expression for the
 exact solution were known.
The $L^2$ approach was also used in
 \cite{AndHigh2012} for the Possion-driven case that we consider here, but our aim is to
show that improved bounds can be obtained from a more refined analysis
that studies the variances directly.  

\section{Analyzing the Variance of the Coupled Processes}
\label{sec:res}

We begin by explicitly detailing the \textit{running assumption} we make on the model. 
 We will suppose that for each $k$, the intensity function for the normalized process $ \lambda_k^N(x)$, along with the components of its gradient and Hessian, are bounded.

\vspace{.1in}

\noindent \textbf{Running assumption.}  \textit{We suppose there is a constant $C>0$ such that,
    \[
     \sum_{k}|\lambda_k^N(x)|\leq C,\quad\sum_{k}\left|\frac{\partial \lambda_k^N(x)}{\partial x_j}\right|\leq C, \ \  and \ \ \sum_{k}\left|\frac{\partial^2 \lambda_k^N (x)}{\partial x_i\partial x_l}\right|\leq C\quad for \,\,any\,\,i,j,l = 1,...,d.
\]
 Hence, letting
 \begin{equation*}
	F^N(x) \eqdef \sum_{k} N^{\gamma}N^{c_k}\lambda_k^N(x)\zeta_k^N,
\end{equation*}
 we have
\[
\left|\frac{\partial F^N_i}{\partial x_j}\right|\leq CN^{\gamma} \quad and \quad \left|\frac{\partial^2 F^N_i}{\partial x_k\partial x_l}\right|\leq CN^{\gamma}\quad for \,\,any\,\,i,j,k,l=1,2,...,d.
\]
We will also assume throughout that our initial condition is fixed.  That is,  $X^N(0) \equiv x(0) \in \R_{\ge 0}$ for all choices of $N$.} 
\hfill $\square$

We note that not all reaction networks satisfy the above running assumption.  However, any model for which there is a $w \in \R^d_{>0}$ with $w \cdot \zeta_k \le 0$ for all $k$ will satisfy the running assumption  so long as $\lambda_k^N$ is  defined to satisfy it outside the positive orthant. (Note that the $\tau$-leap process may leave the positive orthant.)  In particular, any process which satisfies a conservation relation will satisfy the running assumption.  We further wish to emphasize that the boundedness assumption is being made on the \textit{scaled} intensity function, and not the intensity function for the unscaled process.  Since the scaled intensity function is explicitly constructed to satisfy $\lambda_k^N(x) =O(1)$ \textit{in our region of interest},  it is not an unrealistic assumption.

\begin{example}
\label{example:running1}

	Consider the family of models, indexed by $N$, with  the following network, rate parameters, which are placed alongside the reaction arrows, and initial condition,
\[
	2A \overset{1/N}{\underset{1}{\rightleftarrows}} B, \qquad X_1(0) = X_2(0) = M\cdot N,
\]
where $X_1(t)$ and $X_2(t)$ denote the abundances of the $A$ and $B$ molecules, 
respectively, at time $t$,  $M > 0$ is some constant, and only those $N$ for which $M\cdot N \in \Z$ are considered.  Note that $w = [1, 2]^T$ satisfies $w \cdot \zeta_k = 0$ for each $k \in \{1,2\}$.
Choosing $\alpha_i \equiv 1$, so that 
$\rho_k \equiv r_k \equiv c_k \equiv \rho = 1$, and $\gamma = 0 $,
the normalized process satisfies
\begin{align}
\begin{split}
	X^N(t) = X^N(0) + \frac{1}{N} &Y_1\left( N \int_0^t \lambda_1^N(X^N(s)) ds\right)\left[\begin{array}{c}
	-2\\
	1
	\end{array}\right] \\
	 + \frac{1}{N} &Y_2\left( N  \int_0^t \lambda_2^N(X^N(s)) ds \right) \left[\begin{array}{c}
	2\\
	-1
	\end{array}\right],
	\end{split}
	\label{eq:coupled_example}
\end{align}
with
\begin{align*}
	\lambda_1^N(x) &= \left\{\begin{array}{cc}
		x_1(x_1-N^{-1}), & \text{ if } x_1,x_2 \ge  0\\[1ex]
		h_1^N(x), &  \text{ if } x_1 < 0\text{ or } x_2 <0
		\end{array}\right. ,
		\end{align*}
		where $h_1^N(x)$ is any function which ensures $\lambda_1^N$ is differentiable at the boundary of the positive orthant and satisfies our running assumptions
and
\begin{align*}
		 \lambda_2^N(x) &=  \left\{\begin{array}{cc}
		x_2, & \text{ if } x_1,x_2 \ge 0 \\[1ex]
		h_2^N(x), &  \text{ if } x_1 < 0\text{ or } x_2 <0
		\end{array}\right.  ,
\end{align*}
where $h_2^N(x)$ is also chosen to ensure the satisfaction of our running assumptions.   For example, we only require that $h_2^N(x) = 0$ when $x_2 = 0$, $h_2^N(x) = (3/2)\lfloor M\cdot N\rfloor$ when $x_1 = 0$, and $h_2^N(x)$ itself satisfies the running assumptions outside the positive orthant.
Note that it is necessary to define each $\lambda_k^N$ outside $\Z^2_{\ge 0}$ for the tau-leaping process, which does not satisfy the same constraints as the exact process.
Also note that a bound of $9M^2$ can be used to satisfy our running assumption for this (non-linear) model.

\end{example}

 \subsection{Statements and proofs of our main results}

We couple $X^N$ and $Z^N_h$ via \eqref{eq:Z_X1} and \eqref{eq:Z_X2} and are interested in the problem of estimating
\[
	\textsf{Var}\left( f(X^N) - f(Z^N_h)\right),
\]
for Lipschitz functions $f:\R^d \to \R$.  Our main result is the following.
	\begin{theorem}
  \label{thm:var}
		Suppose the model satisfies our running assumption and that $X^N$ and $Z_h^N$ satisfy the coupling \eqref{eq:Z_X1} and \eqref{eq:Z_X2}.  Assume that $f:\R^d \to \R$ has continuous second derivative and there exists a constant $M$ such that
     \[
       \left|\frac{\partial f}{\partial x_i}\right|\leq M \quad and \quad \left|\frac{\partial^2 f}{\partial x_i\partial x_j}\right|\leq M\quad for \,\,any\,\,i,j=1,2,...,d.
     \]
		Then, for $0 \le t \le T$, 
		\[
			\textsf{Var}(f(X^N(t)) - f(Z_h^N(t)))\leq \bar{C_1}(N^{\gamma}T,d,M)N^{-\rho}(N^{\gamma}h)^2+\bar{C_2}(N^{\gamma}T,d,M)N^{-\rho}(N^{\gamma}h),
		\]
where $\bar{C_1}$ is defined as (\ref{eq:C1}) and $\bar{C_2}$ is defined as (\ref{eq:C2}).    
\end{theorem}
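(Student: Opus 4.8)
\medskip
\noindent\textbf{Proof proposal.}
The plan is to bound the variance \emph{directly}, by identifying it with the expected quadratic variation of a Doob martingale, so that the deterministic leading-order (weak) error --- which contributes to the \emph{mean} of $f(X^N)-f(Z_h^N)$ but not to its variance --- is automatically discarded. This is the essential saving over the $L^2$ (second-moment) route used in \cite{AndHigh2012}.

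First I would collect the auxiliary moment bounds (to be proved in the Appendix). Subtracting (\ref{eq:Z_X2}) from (\ref{eq:Z_X1}), the shared $Y_{k,1}$ terms cancel, so $D^N := X^N - Z_h^N$ is a pure-jump process driven only by $Y_{k,2},Y_{k,3}$, with intensities at most $C N^\gamma N^{c_k}\big(|D^N(s)| + |Z_h^N(s) - Z_h^N(\eta(s))|\big)$ by the running assumption. Using $|\zeta_k^N| = O(N^{-\rho_k})$ and $c_k \le \rho_k$, a direct triangle-inequality estimate of the one-step displacement gives $\E\,|Z_h^N(s) - Z_h^N(\eta(s))| = O(N^\gamma h)$, and a Gronwall argument (with constant $e^{O(N^\gamma T)}$, using $|\partial F^N|, |\partial^2 F^N| = O(N^\gamma)$) then yields $\E\,|D^N(s)| = O(N^\gamma h)$ and $\E\,|D^N(s)|^2 = O\big((N^\gamma h)^2 + N^{-\rho}(N^\gamma h)\big)$, with constants depending only on $N^\gamma T$ and $d$. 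It is worth noting that the second moment of $D^N$ already carries an $(N^\gamma h)^2$ term \emph{without} the prefactor $N^{-\rho}$, which is exactly why it is too coarse for the theorem and forces the variance-based argument that follows.

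For the main step, fix $t \le T$ and let $\phi(s,x,y,\bar y) := \E\big[\,f(X^N(t)) - f(Z_h^N(t)) \,\big|\, X^N(s)=x,\ Z_h^N(s)=y,\ Z_h^N(\eta(s))=\bar y\,\big]$ for $s \le t$, the fourth argument recording the value of $Z_h^N$ at the last tau-leaping grid point before $s$, which the dynamics on $[s,t]$ require. Then $M(s) := \phi(s, X^N(s), Z_h^N(s), Z_h^N(\eta(s)))$ is a martingale with $M(0)$ deterministic and $M(t) = f(X^N(t)) - f(Z_h^N(t))$, so $\textsf{Var}\big(f(X^N(t)) - f(Z_h^N(t))\big) = \E[\langle M\rangle_t]$, and $\langle M\rangle_t$ splits reaction-by-reaction into the contributions of $Y_{k,1},Y_{k,2},Y_{k,3}$. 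When $Y_{k,2}$ or $Y_{k,3}$ fires, only one of $X^N,Z_h^N$ jumps, by $\zeta_k^N$, so the jump of $M$ is at most $|\zeta_k^N|$ times a bound on $|\nabla_x\phi|$ or $|\nabla_y\phi|$, while its intensity is at most $C N^\gamma N^{c_k}\big(|D^N| + |Z_h^N(s) - Z_h^N(\eta(s))|\big)$; since $|\zeta_k^N|^2 N^\gamma N^{c_k} = O(N^\gamma N^{-\rho})$ and $\E\,|D^N|, \E\,|Z_h^N(s) - Z_h^N(\eta(s))| = O(N^\gamma h)$, these channels contribute $O(N^{-\rho}(N^\gamma h))$, modulo a constant in $N^\gamma t$. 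When $Y_{k,1}$ fires --- at the large intensity $O(N^\gamma N^{c_k})$ --- \emph{both} processes jump by $\zeta_k^N$, so by the fundamental theorem of calculus the jump of $M$ is at most $|\zeta_k^N|$ times $\sup|\nabla_x\phi + \nabla_y\phi|$ over an $O(N^{-\rho_k})$-neighbourhood. The heart of the proof is the Appendix estimate --- obtained by differentiating the coupled flow --- that $\nabla_x\phi + \nabla_y\phi$ is \emph{small}: perturbing $x$ and $y$ together produces almost identical first variations of $X^N$ and $Z_h^N$ (they are driven by the \emph{same} $Y_{k,1}$), so their difference is controlled by the coupling gap, and since $f$ has bounded second derivative one obtains $|\nabla_x\phi(s,x,y,\bar y) + \nabla_y\phi(s,x,y,\bar y)| \le C(N^\gamma t, d, M)\,\E\big[\,|X^N(t) - Z_h^N(t)|\,\big|\,\mathcal F_s\,\big]$. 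By Jensen, $\E\big[\sup|\nabla_x\phi + \nabla_y\phi|^2\big] \le C^2\,\E|D^N(t)|^2 = O\big((N^\gamma h)^2 + N^{-\rho}(N^\gamma h)\big)$, so the $Y_{k,1}$ contribution is at most $\sum_k |\zeta_k^N|^2 N^\gamma N^{c_k}\,\E[\sup|\nabla_x\phi + \nabla_y\phi|^2]\,t \lesssim N^\gamma N^{-\rho}\big((N^\gamma h)^2 + N^{-\rho}(N^\gamma h)\big)\,t$, which is of the form $N^{-\rho}(N^\gamma h)^2 + N^{-\rho}(N^\gamma h)$ up to a constant in $N^\gamma t$. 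Summing the three groups and absorbing the $N^\gamma t$-factors into $\bar{C_1},\bar{C_2}$ gives the stated bound.

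I expect the main obstacle to be the regularity estimate on $\phi$ in the shared direction: it is the rigorous form of the statement ``the deterministic part of the tau-leaping bias does not enter the variance,'' and it calls for a careful first-variation analysis of the coupled pair exploiting the shared-noise structure of (\ref{eq:Z_X1})--(\ref{eq:Z_X2}), together with disciplined bookkeeping of the scaling exponents $\gamma,\rho,\rho_k,c_k$ --- in particular the repeated use of $c_k \le \rho_k$ and $\rho = \min_k \rho_k$ --- needed to land exactly on $N^{-\rho}(N^\gamma h)^2 + N^{-\rho}(N^\gamma h)$ rather than something weaker.
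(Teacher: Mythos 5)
Your overall strategy is genuinely different from the paper's and, at the level of orders of magnitude, it is aimed at the right target: representing $\textsf{Var}(f(X^N(t))-f(Z_h^N(t)))$ as the expected quadratic variation of the Doob martingale $M(s)=\E[f(X^N(t))-f(Z_h^N(t))\mid\mathcal F_s]$ does discard the deterministic bias, and your channel-by-channel accounting correctly isolates why the shared channel $Y_{k,1}$ --- the only one with $O(N^\gamma N^{c_k})$ intensity --- can still contribute only $O(N^{-\rho}(N^\gamma h)^2)$: its per-jump quadratic variation carries $|\zeta_k^N|^2N^{c_k}=O(N^{-\rho_k})$ while the sensitivity of $\phi$ in the symmetric direction is $O(N^\gamma h)$. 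The paper obtains the same $N^{-\rho}$ prefactor by a more elementary mechanism: it decomposes $X_i^N-Z_{h,i}^N$ pathwise as in \eqref{eq:3terms} into the explicit martingale \eqref{eq:M_f} plus two drift integrals, writes each drift difference as $\bigl(\int_0^1\nabla F_i^N\,dr\bigr)\cdot(\text{increment})$, and applies the product-variance inequality of Lemma~\ref{lem:Var_bound}; the crucial point there is that the coefficient $\int_0^1\nabla F_i^N\,dr$ has \emph{variance} $O(N^{2\gamma}N^{-\rho})$ (not merely sup-norm $O(N^\gamma)$) because the processes concentrate around the deterministic paths of Lemmas~\ref{lem:main_var_bound_X} and~\ref{lem:main_var_bound_Z}, so the $(\E[\text{increment}])^2=O((N^\gamma h)^2)$ factor gets multiplied by $N^{-\rho}$. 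No differentiation of conditional expectations is needed, and Gronwall closes the argument.

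The genuine gap in your proposal is the pivotal estimate $|\nabla_x\phi+\nabla_y\phi|\le C\,\E[|X^N(t)-Z_h^N(t)|\mid\mathcal F_s]$, which you assert but do not prove, and which as stated is not quite right. First, even granting smoothness of $\phi$, a first-variation computation gives $(\nabla_x\phi+\nabla_y\phi)\cdot\delta=\E[\nabla f(X^N(t))\cdot\delta X(t)-\nabla f(Z_h^N(t))\cdot\delta Z(t)\mid\cdot]$, which is controlled by $\E[|D^N(t)|\,|\delta X(t)|]$ \emph{plus} $\E[|\delta X(t)-\delta Z(t)|]$; the second term is not dominated by the coupling gap and needs its own Gronwall estimate for the difference of first variations of the coupled system (it is $O(N^\gamma h)$ for independent reasons --- the $Z$-flow does not respond to the perturbation until the next grid point, and the Jacobians are evaluated along different paths --- so the final orders survive, but your key lemma must be restated and proved with this extra term). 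Second, and more seriously, first variations of pure-jump processes with state-dependent intensities are not pathwise derivatives (an infinitesimal change of state changes the jump times), so making ``differentiating the coupled flow'' rigorous requires a backward-Kolmogorov/semigroup argument for the time-inhomogeneous coupled generator, or a Girsanov-type sensitivity analysis; none of that machinery is set up here, and it is a substantial piece of analysis rather than an appendix-level lemma. Until that estimate is established, the $Y_{k,1}$ contribution --- exactly where the improvement over \cite{AndHigh2012} must come from --- is unproved.
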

Note that  the functional dependence of  $\bar C_1$ and $\bar C_2$ on $N$ and $T$ is via the product $N^\gamma T$, so 
 in the case of $T$ fixed and $\gamma\leq 0$, the values of $\bar{C_1}$ and $\bar{C_2}$ may be bounded above uniformly in $N$. 

\begin{remark}
	If it is the case that $X$ remains in a bounded region with a probability of one, for example if there is a $w \in \R^d_{>0}$ with $w \cdot \zeta_k \le 0$ for all $k$, then the assumption pertaining to the derivative bounds of $f$ is automatically satisfied for any smooth $f$.
\end{remark}


The main results in \cite{AndHigh2012}, which used  the $L^2$ norm of the difference of the coupled processes,  present an upper bound for  the variance of $f(X^N(t)) - f(Z_h^N(t))$ of the form  $\bar{C}_{11}(N^\gamma h)^2 + \bar{C}_{22}N^{-\rho} (N^\gamma h)$ for  constants $\bar{C}_{11}$ and $\bar{C}_{22}$ which are similar to $\bar{C}_{1}$ and $\bar{C}_{2}$ of Theorem \ref{thm:var}.  Hence, a key improvement in the present work lies in the multiplication by $N^{-\rho}$ of the term $(N^\gamma h)^2$.   Importantly, if $\gamma \le 0$ and if $\bar{C_1}$ is not significantly larger than $\bar{C_2}$, we may now conclude that the variance has an upper bound that is dominated by a term of the form $N^{-\rho} (N^\gamma h)$ multiplied by a constant.  This is a conclusion that the analysis in \cite{AndHigh2012} does not allow us to reach under similarly reasonable assumptions.



An immediate corollary to Theorem \ref{thm:var} is that the coupled tau-leaping processes satisfy a similar bound.

	\begin{corollary}
  \label{thm:cor}
 Under the assumptions of Theorem~\ref{thm:var},
 $\Zm_{\ell}$ and
$\Zm_{\ell-1}$
in (\ref{eq:Z1}) and
(\ref{eq:Z2})
satisfy, for $0 \le t \le T$, 
		\[
			\textsf{Var}(f(\Zm_{\ell}(t)) - f(\Zm_{\ell-1}(t)))\leq \bar{D_1}(N^{\gamma}T,d,M)N^{-\rho}(N^{\gamma}h_\ell)^2+\bar{D_2}(N^{\gamma}T,d,M)N^{-\rho}(N^{\gamma}h_\ell).
		\]
	\end{corollary}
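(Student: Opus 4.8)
The plan is to obtain Corollary~\ref{thm:cor} as a direct transcription of the proof of Theorem~\ref{thm:var}, rather than by a black‑box reduction. The tempting black‑box idea --- introduce the exact process $X^N$ as an intermediary, write $f(\Zm_\ell)-f(\Zm_{\ell-1}) = (f(\Zm_\ell)-f(X^N)) - (f(\Zm_{\ell-1})-f(X^N))$, and apply Minkowski's inequality to the square roots of the variances --- does not work cleanly: the quantity to be bounded is the variance under the \emph{specific} coupling \eqref{eq:Z1}--\eqref{eq:Z2}, and there is no way to attach an auxiliary $X^N$ that is simultaneously tightly coupled to both $\Zm_\ell$ and $\Zm_{\ell-1}$ while leaving the law of the pair $(\Zm_\ell,\Zm_{\ell-1})$ intact (a poorly coupled $X^N$ would leave a residual variance of order $N^{-2\rho}$, which is too large once the mesh is fine). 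Instead I would observe that the proof of Theorem~\ref{thm:var} never uses that $X^N$ solves \eqref{eq:main_multi}; it uses only that (i) in \eqref{eq:Z_X1}--\eqref{eq:Z_X2} the two processes share the common clocks $Y_{k,1}$ with the same time change, so their difference is driven solely by the ``defect'' clocks $Y_{k,2},Y_{k,3}$ whose intensities are controlled by $\sum_k|\lambda_k^N(X^N(s)) - \lambda_k^N(\Zm_L(\eta_L(s)))|$; (ii) both processes obey the running assumption; and (iii) the first process's intensity is evaluated along a mesh refining the second's, the coarser mesh having width at most $h$.

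Since $h_\ell = TM^{-\ell}$ we have $h_{\ell-1} = M h_\ell$, the level-$\ell$ mesh refines the level-$(\ell-1)$ mesh, and $\eta_\ell(s)\in[\eta_{\ell-1}(s),s]$ for all $s$. I would therefore substitute $(\Zm_\ell,\Zm_{\ell-1})$ of \eqref{eq:Z1}--\eqref{eq:Z2} for $(X^N,Z_h^N)$ throughout, with $h_{\ell-1}$ in the role of $h$. Concretely, from \eqref{eq:Z1}--\eqref{eq:Z2} the difference $D(t)\eqdef\Zm_\ell(t)-\Zm_{\ell-1}(t)$ satisfies $D(0)=0$ and
\[
   D(t) = \sum_k \zeta_k^N\Big[\,Y_{k,2}\big(\cdots\big) - Y_{k,3}\big(\cdots\big)\,\Big],
\]
where each inner time argument is at most $N^\gamma N^{c_k}\int_0^t|\lambda_k^N(\Zm_\ell(\eta_\ell(s))) - \lambda_k^N(\Zm_{\ell-1}(\eta_{\ell-1}(s)))|\,ds$. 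Splitting the integrand as
\[
   |\lambda_k^N(\Zm_\ell(\eta_\ell(s))) - \lambda_k^N(\Zm_\ell(\eta_{\ell-1}(s)))| + |\lambda_k^N(\Zm_\ell(\eta_{\ell-1}(s))) - \lambda_k^N(\Zm_{\ell-1}(\eta_{\ell-1}(s)))|,
\]
the running assumption bounds the first term by a constant times $|\Zm_\ell(\eta_\ell(s)) - \Zm_\ell(\eta_{\ell-1}(s))|$, an increment of the fine tau-leaping process over a coarse-mesh interval of length $\le h_{\ell-1}$ --- the exact analogue of $|X^N(s)-X^N(\eta(s))|$ in the proof of Theorem~\ref{thm:var} --- while the second term is bounded by a constant times $|D(\eta_{\ell-1}(s))|$, exactly as there. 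A second-order Taylor expansion of $f$ then reduces $\textsf{Var}(f(\Zm_\ell(t))-f(\Zm_{\ell-1}(t)))$ to moment estimates on $D$ identical in form to those in the proof of Theorem~\ref{thm:var}. Carrying the substitution $h\mapsto h_{\ell-1}=Mh_\ell$ through the final inequality, so that $(N^\gamma h_{\ell-1})^2 = M^2(N^\gamma h_\ell)^2$ and $N^\gamma h_{\ell-1}=M(N^\gamma h_\ell)$, and absorbing the powers of $M$ into the constants, gives the stated bound with (for instance) $\bar D_1 = M^2\bar C_1$ and $\bar D_2 = M\bar C_2$.

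The one point I would verify carefully is that the mesh-interval increment estimate which the proof of Theorem~\ref{thm:var} applies to $X^N$ transfers to the fine tau-leaping process $\Zm_\ell$. It does: that estimate rests solely on the boundedness of $\sum_k\lambda_k^N$ from the running assumption, and an increment bound of precisely this kind is in any case needed for the frozen process $Z_h^N$ in the proof of Theorem~\ref{thm:var} (see Appendix~\ref{sec:app}), so it is already in hand. I expect this to be the only non-mechanical part; everything else is a relabelling of the Theorem~\ref{thm:var} argument, which is exactly what is meant by calling the corollary ``immediate.''
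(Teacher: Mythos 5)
Your alternative route---re-running the entire Theorem~\ref{thm:var} argument with $(\Zm_{\ell},\Zm_{\ell-1})$ in place of $(X^N,Z_h^N)$, splitting the intensity difference through $\Zm_{\ell}(\eta_{\ell-1}(s))$ and tracking the mesh ratio $M$ through the constants---is workable in outline, but it is a genuinely different and far heavier route than the paper's, and your reason for rejecting the paper's route is not correct. The paper's proof is exactly the ``black-box'' reduction you dismiss: write $f(\Zm_{\ell}(t))-f(\Zm_{\ell-1}(t)) = \bigl(f(\Zm_{\ell}(t))-f(X^N(t))\bigr)+\bigl(f(X^N(t))-f(\Zm_{\ell-1}(t))\bigr)$, apply $\textsf{Var}(A+B)\le 2\textsf{Var}(A)+2\textsf{Var}(B)$, and invoke Theorem~\ref{thm:var} twice (once with $h=h_{\ell}$, once with $h=h_{\ell-1}=Mh_{\ell}$).

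Your objection is that no auxiliary $X^N$ can be tightly coupled to both tau-leap processes simultaneously while preserving the law of the pair $(\Zm_{\ell},\Zm_{\ell-1})$. In fact such a triple coupling exists: drive all three processes by a single Poisson random measure per reaction channel (equivalently, a common-clock construction in which $Y_{k,1}$ carries the minimum of all three intensities, with further independent clocks carrying the pairwise and individual excesses). For any two processes driven by the same Poisson random measure with intensities $a(s)$ and $b(s)$, the indicator regions $[0,a\wedge b)$, $[a\wedge b,a)$, $[a\wedge b,b)$ are disjoint, so the resulting pair has exactly the law of the split construction with independent $Y_{k,1},Y_{k,2},Y_{k,3}$; moreover each pair evolves autonomously under the triple construction. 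Hence all three pairwise marginals simultaneously agree with \eqref{eq:Z1}--\eqref{eq:Z2} and \eqref{eq:Z_X1}--\eqref{eq:Z_X2}, there is no ``residual variance'' of order $N^{-2\rho}$, and since $\textsf{Var}(f(\Zm_{\ell}(t))-f(\Zm_{\ell-1}(t)))$ depends only on the law of that pair, the bound transfers. This is the standard justification (implicit in the paper, cf.\ the coupling discussion around \cite{AK2014}), and it is what makes the corollary genuinely ``immediate.'' Your transcription approach would also need the analogues of Lemma~\ref{lem:martingale_bound} and of \cite[Lemma~3]{AndHigh2012} for the tau--tau pair, which you assert but do not supply; the paper's two-line argument avoids all of that.
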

	
	\begin{proof}
		This is an immediate consequence of  the following simple inequality
		\begin{align*}
			\textsf{Var}&(f(\Zm_{\ell}(t)) - f(\Zm_{\ell-1}(t)))=\textsf{Var}(f(\Zm_{\ell}(t)) - f(X^N(t)) + f(X^N(t)) -  f(\Zm_{\ell-1}(t)))\\[1ex]
			&\le 2\textsf{Var}(f(\Zm_{\ell}(t)) - f(X^N(t))) + 2\textsf{Var}(f(X^N(t)) -  f(\Zm_{\ell-1}(t))),
		\end{align*}
		and Theorem \ref{thm:var}.
	\end{proof}

In order to prove Theorem \ref{thm:var}, we first present some preliminary calculations on the variance of the processes $X^N$ and $Z^N_h$, and some useful lemmas.

        By our running assumption, we know that
        \[
          |F_i^N(x)-F_i^N(y)|\leq \sqrt{d}CN^{\gamma}|x-y|,
        \]
        for all $x,y$ and $i=1,2,...,d$, where $|\cdot |$ is the 2-norm.
We let $x^N$ be the solution to the \textit{deterministic} equation
\begin{equation}\label{gen_approx_odex}
	x^N(t) = x(0) + \int_0^t F^N(x^N(s))ds,
\end{equation}
where $x(0)$ is the initial condition for each  member of our family of processes.
\begin{lemma}
\label{lem:main_var_bound_X}
	Under our running assumption, for any $T>0$ we have
\[
			\E\left[ \sup_{s \le T} |X^N(s) - x^N(s)|^2\right] \leq   \left(C_1 e^{C_2\cdot (TN^{\gamma})^2}\right)\left(TN^{\gamma} N^{-\rho}\right), 
\]
where $C_1$ and $C_2$ are two positive constants that do not depend on $N,\gamma$, $T$.
\end{lemma}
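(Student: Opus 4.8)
The plan is to compare $X^N$ with the deterministic flow $x^N$ through the random time change representation \eqref{eq:main_multi}, splitting the difference into a martingale part coming from the centered Poisson processes and a drift part controlled by the Lipschitz constant of $F^N$. First I would write
\[
X^N(t) - x^N(t) = \sum_k \zeta_k^N \widetilde Y_k\!\left(N^\gamma N^{c_k}\!\int_0^t \lambda_k^N(X^N(s))\,ds\right) + \int_0^t \bigl(F^N(X^N(s)) - F^N(x^N(s))\bigr)\,ds,
\]
where $\widetilde Y_k(u) = Y_k(u) - u$ is the compensated (martingale) Poisson process. The key quantitative inputs are: (i) the running assumption gives $\sum_k \lambda_k^N \le C$, so the total integrated intensity up to time $T$ is at most $C N^\gamma \sum_k N^{c_k} \cdot T$, but more usefully, since $\zeta_k^N = O(N^{-\rho_k})$ with $\rho = \min_k \rho_k$, each increment contributes a factor $N^{-2\rho}$ in the second moment; and (ii) $|F^N(x) - F^N(y)| \le \sqrt d\, C N^\gamma |x - y|$, already recorded in the excerpt.

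The core estimate is a Doob/Gr\"onwall argument. For the martingale term I would apply the $L^2$ maximal inequality (Doob): the quadratic variation of $\sum_k \zeta_k^N \widetilde Y_k(\int_0^\cdot N^\gamma N^{c_k}\lambda_k^N(X^N(s))ds)$ is $\sum_k |\zeta_k^N|^2 N^\gamma N^{c_k}\int_0^t \lambda_k^N(X^N(s))\,ds$. Bounding $|\zeta_k^N|^2 \le C N^{-2\rho}$ and $N^{c_k}\lambda_k^N \le C$ (absorbing the sum over $k$ into $C$), and using $c_k \le \rho_k$ so that $N^{c_k}|\zeta_k^N|^2 = O(N^{-\rho_k}) = O(N^{-\rho})$, this gives an expected supremum-squared bound of order $N^{-\rho} N^\gamma T$ for the martingale piece — exactly the target order, before the Gr\"onwall blow-up. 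For the drift term, $\left|\int_0^t (F^N(X^N) - F^N(x^N))ds\right|^2 \le t \int_0^t d C^2 N^{2\gamma} |X^N(s) - x^N(s)|^2\,ds$ by Cauchy--Schwarz. Setting $\phi(t) = \E[\sup_{s\le t}|X^N(s) - x^N(s)|^2]$, combining the two pieces with $(a+b)^2 \le 2a^2 + 2b^2$ yields
\[
\phi(t) \le c_1 N^{-\rho} N^\gamma T + c_2 N^{2\gamma} t \int_0^t \phi(s)\,ds,
\]
and Gr\"onwall's inequality gives $\phi(T) \le c_1 N^{-\rho} N^\gamma T \cdot \exp(c_2 N^{2\gamma} T^2)$, which is the claimed form with $C_1 = c_1$, $C_2 = c_2$, and the exponent written as $C_2 (TN^\gamma)^2$.

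The main obstacle I anticipate is bookkeeping the scaling exponents so that the single factor $N^{-\rho}$ (rather than $N^{-2\rho}$ or $N^{-\rho} N^\gamma$-type artifacts) emerges cleanly from the martingale term: the crucial observation is that in the quadratic variation one gets $|\zeta_k^N|^2 N^{c_k} = N^{-2\rho_k} N^{c_k} \le N^{-\rho_k} \le N^{-\rho}$ because $c_k \le \rho_k$, so only one power of $N^{-\rho}$ is lost while the other $N^{\gamma}$ combines with $T$ to give the $N^\gamma T$ dependence. A secondary technical point is justifying the Doob maximal inequality for the time-changed Poisson martingales and verifying the requisite integrability (so that the optional-stopping/localization step is legitimate); this is standard given the boundedness of $\lambda_k^N$, but should be stated. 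Everything else — Cauchy--Schwarz on the drift and the Gr\"onwall step — is routine.
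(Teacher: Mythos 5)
Your proposal is correct and follows essentially the same route as the paper: the identical martingale-plus-drift decomposition via the centered Poisson processes, the same scaling bookkeeping $|\zeta_k^N|^2 N^{c_k} \le N^{-\rho_k} \le N^{-\rho}$, Cauchy--Schwarz on the drift, and Gr\"onwall. The only (immaterial) difference is that you invoke Doob's $L^2$ maximal inequality where the paper cites Burkholder--Davis--Gundy; for $p=2$ these yield the same bound on the expected supremum via the quadratic variation.
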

\begin{proof}
Defining $\tilde Y_k(u) := Y(u) - u$ to be the centered Poisson process we have
		\begin{align*}
			X^N(t) - x^N(t) &= \sum_k \tilde{Y}_{k}\left(N^{\gamma}N^{c_k}\int_0^t\lambda_k^N(X^N(s))ds\right)\zeta_k^N+ \int_0^t F^N(X^N(s)) - F^N(x^N(s)) ds,
		\end{align*}
and so using the trivial bound $(a + b)^2 \le 2a^2 + 2b^2$ plus our running assumption,
\begin{align*}
 |X^N(t) &- x^N(t)|^2\\
 & \leq 2 \left| \sum_k\tilde{Y}_{k}\left(N^{\gamma}N^{c_k}\int_0^t\lambda_k^N(X^N(s))ds\right)\zeta_k^N \right|^2 + 2 C^2 td N^{2\gamma}\int_0^t |X^N(s) - x^N(s)|^2 ds.
 \end{align*}
 Hence, by the Burkholder-Davis-Gundy inequality \cite{BDG1972} and our running assumption
 \begin{align*}
\E& \left[\sup_{s\le t} |X^N(s) - x^N(s)|^2\right] \\
&\le 2 N^{\gamma}\sum_k |\zeta^N_{k}|^2 \int_0^t N^{c_k}\E   \left[ \lambda_k^N(X^N(r)) \right] dr +2C^2 t d  N^{2\gamma}\int_0^t \E \left[ \sup_{s \le r} |X^N(s) - x^N(s)|^2\right] dr\\
& \le C_1N^{\gamma}N^{-\rho}t+ 2C^2 t d N^{2\gamma}\int_0^t \E \left[ \sup_{s \le r} |X^N(s) - x^N(s)|^2\right] dr,
\end{align*}
and the result follows from Gronwall's inequality with $C_2 = 2C^2d$.
\end{proof}

Now we let $z$ be the deterministic solution to
\begin{equation}\label{gen_approx_ode}
	z^N_h(t) = x(0) + \int_0^t F^N(z^N_h(\eta(s)))ds,
\end{equation}
which is the Euler approximate solution to the ODE \eqref{gen_approx_odex}.

\begin{lemma}\label{lem:main_var_bound_Z}
	Under our running assumptions, for any $T>0$ 
	\[
		 \E \left[\sup_{0\le s \le T}	 |Z_h^N(s) - z^N_h(s)|^2\right] \le 
                         \left( C_1 e^{C_2 \cdot (TN^\gamma)^2}  \right) (TN^{\gamma} N^{-\rho}),
\]
where $C_1$ and $C_2$  are the same constants which appear in Lemma \ref{lem:main_var_bound_X}. 
\end{lemma}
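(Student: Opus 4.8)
The plan is to follow the proof of Lemma~\ref{lem:main_var_bound_X} essentially verbatim, the only new ingredient being that the intensities and the drift are now evaluated at the frozen time $\eta(s) = \lfloor s/h\rfloor h$. Writing $\tilde Y_k(u) = Y_k(u) - u$ for the centered Poisson processes and subtracting \eqref{gen_approx_ode} from \eqref{eq:RTC_tau_scaled}, one obtains
\begin{align*}
	Z_h^N(t) - z_h^N(t) &= \sum_k \tilde Y_k\left(N^\gamma N^{c_k}\int_0^t \lambda_k^N(Z_h^N(\eta(s)))\,ds\right)\zeta_k^N \\
	&\quad + \int_0^t \left(F^N(Z_h^N(\eta(s))) - F^N(z_h^N(\eta(s)))\right)ds.
\end{align*}
Applying $(a+b)^2 \le 2a^2 + 2b^2$ and then taking the supremum over $s \le t$ splits the estimate into a martingale contribution and a drift contribution, which I treat separately.

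For the martingale term, the Burkholder--Davis--Gundy inequality \cite{BDG1972} bounds its expected supremum by a constant times $N^\gamma \sum_k |\zeta_k^N|^2 \int_0^t N^{c_k}\E[\lambda_k^N(Z_h^N(\eta(r)))]\,dr$; since $c_k \le \rho_k$ and $|\zeta_k^N|^2 = O(N^{-2\rho_k})$ we have $N^{c_k}|\zeta_k^N|^2 = O(N^{-\rho_k}) \le O(N^{-\rho})$, and the running assumption gives $\sum_k \E[\lambda_k^N(\cdot)] \le C$ uniformly (the assumption holds globally, so no issue arises if $Z_h^N$ leaves the positive orthant), so this term is at most $C_1 N^\gamma N^{-\rho} t$ with the same $C_1$ as in Lemma~\ref{lem:main_var_bound_X}. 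For the drift term, Cauchy--Schwarz in time together with the Lipschitz bound $|F^N(x) - F^N(y)| \le \sqrt{d}\,C N^\gamma |x-y|$ gives
\[
	\left|\int_0^t \left(F^N(Z_h^N(\eta(s))) - F^N(z_h^N(\eta(s)))\right)ds\right|^2 \le d C^2 N^{2\gamma}\, t \int_0^t |Z_h^N(\eta(s)) - z_h^N(\eta(s))|^2\,ds.
\]
The key observation is that $\eta(s) \le s$, hence $|Z_h^N(\eta(s)) - z_h^N(\eta(s))|^2 \le \sup_{r \le s}|Z_h^N(r) - z_h^N(r)|^2$; this lets the piecewise-constant argument be absorbed into the running supremum without producing any additional terms.

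Combining the two bounds yields
\[
	\E\left[\sup_{s\le t}|Z_h^N(s) - z_h^N(s)|^2\right] \le C_1 N^\gamma N^{-\rho} t + 2C^2 d\, t\, N^{2\gamma}\int_0^t \E\left[\sup_{r\le s}|Z_h^N(r) - z_h^N(r)|^2\right]ds,
\]
which is exactly the inequality obtained for $X^N$ in Lemma~\ref{lem:main_var_bound_X}. Gronwall's inequality with $C_2 = 2C^2 d$ then finishes the proof with the same constants. I do not anticipate a genuine obstacle: the argument is structurally identical to the one for $X^N$, and the single point requiring a moment's care is the treatment of the frozen time $\eta(s)$, which — precisely because $\eta(s) \le s$ — causes no difficulty once one works with $\sup_{r \le s}$ rather than pointwise values.
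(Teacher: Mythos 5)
Your proof is correct and mirrors the paper's own argument essentially line for line: the same martingale-plus-drift decomposition, the same Burkholder--Davis--Gundy and Lipschitz estimates, the same observation that $\eta(s)\le s$ lets the frozen-time argument be absorbed into the running supremum, and the same Gronwall step with $C_2 = 2C^2 d$. No further comment is needed.
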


\begin{proof}
Following the proof of Lemma \ref{lem:main_var_bound_X}, we have
\begin{align*}
			Z^N_h(t) - z^N_h(t) &= \sum_k \tilde{Y}_{k}\left(N^{\gamma}N^{c_k}\int_0^t\lambda_k^N(Z^N_h(\eta(s)))ds\right)\zeta_k^N\\
			&\hspace{.2in}+ \int_0^t F^N(Z^N_h(\eta(s))) - F^N(z^N_h(\eta(s))) ds,
		\end{align*}
and, again as a result of the Burkholder-Davis-Gundy inequality \cite{BDG1972} and our running assumptions,
\begin{align}
\E &\left[\sup_{s\le t}  \left|Z^N_h(s) - z^N_h(s)\right|^2\right] \notag\\
& \le 2 N^{\gamma}\sum_k |\zeta^N_{k}|^2 \int_0^t N^{c_k}\E \left[  \lambda_k^N(Z^N_h(\eta(s))) \right] ds+2C^2 t d N^{2\gamma}\int_0^t\E\left[ \sup_{s\le r} |Z^N_h(\eta(s)) - z^N_h(\eta(s))|^2\right] dr\notag\\
& \le C_1N^{\gamma}N^{-\rho}t+C_2tN^{2\gamma}\int_0^t \E  \left[\sup_{s\le r} |Z^N_h(s) - z^N_h(s)|^2\right] dr,\label{eq:needed_bound14545}
\end{align}
where $C_1$ and $C_2$  are the same constants which appear in Lemma \ref{lem:main_var_bound_X}. An application of Gronwall's inequality now completes the proof.
%
%
\end{proof}

Our proof of Theorem \ref{thm:var} will begin by considering the difference between $X^N$ and $Z_h^N$.  It is therefore useful to get bounds on the second moment of the quadratic variation of a certain martingale which will arise naturally.  We therefore let 
\begin{align}
\begin{split}
	M^N(t) \eqdef  \ &\sum_{k}   \tilde Y_{k,2}\left(N^{\gamma}N^{c_k}\int_0^t [ \lambda_k^N(X^N(s))-\lambda_k^N(X^N(s))\wedge \lambda_k^N(Z^N_h(\eta(s)))]ds\right)\zeta^N_k\\
         &-\sum_{k} \tilde Y_{k,3}\left(N^{\gamma}N^{c_k}\int_0^t [\lambda_k^N(Z^N_h(\eta(s)))-\lambda_k^N(X^N(s))\wedge \lambda_k^N(Z^N_h(\eta(s))) ] ds\right)\zeta^N_k\\
	\end{split}
	\label{eq:M_f}
\end{align}
where, again, $\tilde Y_k$ is the centered Poisson process.
The proof of the following lemma can be found in \cite{AndHigh2012}.

 \begin{lemma}\label{lem:martingale_bound} Under our running assumption,
	\[
		 \E [|M^N(t)|^2] \leq c_1Te^{c_2N^{\gamma}T}N^{2\gamma}(N^{-\rho}h),
	\]
where $c_1,c_2$ are independent of $N,\gamma$ and $T$.
 \end{lemma}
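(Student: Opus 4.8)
The plan, which reconstructs the argument of \cite{AndHigh2012}, is to use that $M^N$ is exactly the martingale part of the difference $X^N-Z_h^N$, so that $\E[|M^N(t)|^2]$ is an \emph{expected quadratic variation}, and then to control that quantity by an $L^1$ (not $L^2$) estimate. Subtracting \eqref{eq:Z_X2} from \eqref{eq:Z_X1}, the common $Y_{k,1}$ terms cancel, leaving
\[
  X^N(t)-Z_h^N(t)=M^N(t)+\int_0^t\bigl(F^N(X^N(s))-F^N(Z_h^N(\eta(s)))\bigr)\,ds
\]
with $M^N$ as in \eqref{eq:M_f}. Writing $\nu_k^+(t)$ and $\nu_k^-(t)$ for the two adapted, nondecreasing time changes appearing in \eqref{eq:M_f} — whose sum is $N^\gamma N^{c_k}\int_0^t|\lambda_k^N(X^N(s))-\lambda_k^N(Z_h^N(\eta(s)))|\,ds$, since $a^++(-a)^+=|a|$ — and using that the $\tilde Y_{k,i}$ are independent centered unit-rate Poisson processes (so the building-block martingales $\tilde Y_{k,i}(\nu_k^\pm(\cdot))$ are pairwise orthogonal, each with predictable quadratic variation $\nu_k^\pm$), I get
\[
  \E[|M^N(t)|^2]=\sum_k|\zeta_k^N|^2\,\E[\nu_k^+(t)+\nu_k^-(t)]=\sum_k|\zeta_k^N|^2N^\gamma N^{c_k}\,\E\!\left[\int_0^t|\lambda_k^N(X^N(s))-\lambda_k^N(Z_h^N(\eta(s)))|\,ds\right].
\]

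Next I would extract the factor $N^{-\rho}$ from the prefactor. Since $\zeta_{ki}^N=O(N^{-\rho_k})$ and $c_k\le\rho_k$, one has $|\zeta_k^N|^2N^{c_k}=O(N^{c_k-2\rho_k})=O(N^{-\rho_k})=O(N^{-\rho})$; combined with the Lipschitz bound $\sum_k|\lambda_k^N(x)-\lambda_k^N(y)|\le c|x-y|$ coming from the running assumption on $\nabla\lambda_k^N$, this gives (absorbing constants into a generic $c$ that may change line to line)
\[
  \E[|M^N(t)|^2]\le c\,N^{\gamma-\rho}\int_0^t\E|X^N(s)-Z_h^N(\eta(s))|\,ds\le c\,N^{\gamma-\rho}\int_0^t\bigl(\E|X^N(s)-Z_h^N(s)|+N^\gamma h\bigr)\,ds,
\]
where I used that the within-step displacement of $Z_h^N$ is $O(N^\gamma h)$ in expectation: $\E|Z_h^N(s)-Z_h^N(\eta(s))|\le\sum_k|\zeta_k^N|N^\gamma N^{c_k}\,\E\!\bigl[\int_{\eta(s)}^s\lambda_k^N(Z_h^N(\eta(r)))\,dr\bigr]\le cN^\gamma h$, because $|\zeta_k^N|N^{c_k}=O(1)$ (again from $c_k\le\rho_k$) and $\sum_k\lambda_k^N\le C$. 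The remaining ingredient is the strong-error bound $\E|X^N(s)-Z_h^N(s)|\le c\,N^{2\gamma}h\,s\,e^{cN^\gamma s}$, which I would obtain by exactly the same mechanism applied to the decomposition above: taking absolute values there and using $\E[Y(\tau)]=\E[\tau]$ for the adapted time changes, $\E|X^N(t)-Z_h^N(t)|\le\sum_k|\zeta_k^N|N^\gamma N^{c_k}\,\E\!\bigl[\int_0^t|\lambda_k^N(X^N(s))-\lambda_k^N(Z_h^N(\eta(s)))|\,ds\bigr]\le c\,N^\gamma\int_0^t\bigl(\E|X^N(s)-Z_h^N(s)|+N^\gamma h\bigr)\,ds$, and Gronwall's inequality closes it.

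Inserting this strong-error bound into the estimate for $\E[|M^N(t)|^2]$ and integrating leaves two terms, of orders $N^{3\gamma-\rho}h\,t^2e^{cN^\gamma t}$ and $N^{2\gamma-\rho}h\,t$. Since $t\le T$ and $x\le e^x$, one has $N^\gamma t^2e^{cN^\gamma t}=t\,(N^\gamma t)\,e^{cN^\gamma t}\le Te^{(c+1)N^\gamma T}$, so the first term is $\le c\,N^{2\gamma-\rho}hT e^{cN^\gamma T}$ and the second is $\le c\,N^{2\gamma-\rho}hT$; hence $\E[|M^N(t)|^2]\le c_1Te^{c_2N^\gamma T}N^{2\gamma}(N^{-\rho}h)$, with $c_1,c_2$ depending only on model constants (the dimension $d$, the number of reactions, the constant $C$ of the running assumption, and the reaction vectors), as claimed.

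The main obstacle — and the reason for running these estimates in $L^1$ rather than $L^2$ — is getting $h$ to the \emph{first} power together with the exponent $N^{2\gamma-\rho}$. If one instead bounded $\E|X^N(s)-Z_h^N(\eta(s))|$ by the square root of its second moment, using Lemmas \ref{lem:main_var_bound_X}--\ref{lem:main_var_bound_Z}, one would pick up an irreducible ``diffusive'' contribution of order $\sqrt{N^{\gamma-\rho}h}$ from the Poisson fluctuations over a single step; after multiplication by $N^{\gamma-\rho}$ and integration this produces a term of order $N^{3(\gamma-\rho)/2}h^{1/2}$, which is not dominated by $N^{2\gamma-\rho}h$ in general. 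Working throughout with the absolute difference — so that the within-step displacement and the integrated intensity differences are genuinely $O(N^\gamma h)$, controlled by \emph{expected reaction counts} rather than by second moments — is what forces the clean $h^1$ bound. The only other delicate points are the exponent bookkeeping that isolates the factor $N^{-\rho}$ (using $c_k\le\rho_k$ and $\rho=\min_k\rho_k$) and the justification of the quadratic-variation identity for the time-changed Poisson martingales.
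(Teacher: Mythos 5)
Your reconstruction is correct and follows the same route as the proof the paper defers to in \cite{AndHigh2012}: compute $\E[|M^N(t)|^2]$ exactly as the expected quadratic variation $\sum_k|\zeta_k^N|^2\E[\nu_k^+(t)+\nu_k^-(t)]$ via orthogonality of the time-changed centered Poisson martingales, reduce to $N^{\gamma-\rho}\int_0^t\E|X^N(s)-Z_h^N(\eta(s))|\,ds$ using $|\zeta_k^N|^2N^{c_k}=O(N^{-\rho})$ and the Lipschitz bound on $\lambda_k^N$, and close with the $L^1$ strong-error estimate (the paper's own citation of \cite[Lemma~3]{AndHigh2012}) plus Gronwall. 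Your closing remark about why the estimate must be run in $L^1$ rather than $L^2$ correctly identifies the point of the lemma.
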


	We are now ready to prove our main result.
	
\begin{proof}[Proof of Theorem \ref{thm:var}]
		We first prove the result in the case that $f_i(x) = x_i$.  We  have
		\begin{align}
			X_i^N(t) &- Z_{h,i}^N(t) = M_i^N(t) + \int_0^t F_i^N(X^N(s)) - F_i^N(Z_h^N(\eta(s))) ds\notag \\
			&= M_i^N(t) + \int_0^t F_i^N(X^N(s)) - F_i^N(Z_h^N(s)) ds + \int_0^t F_i^N(Z_h^N(s)) - F_i^N(Z_h^N(\eta(s))) ds,
			\label{eq:3terms}
		\end{align}
		where $M^N(t)$ is defined in \eqref{eq:M_f}.
%

		We will prove the result by first considering  the variance of the first and third terms of \eqref{eq:3terms}.  Consideration of the second term will then lead naturally to an application of Gronwall's inequality.
		
		To begin, we note that Lemma~\ref{lem:martingale_bound} implies that
		\[
			\textsf{Var}(M^N_i(t)) \leq c_1Te^{c_2N^{\gamma}T}N^{2\gamma}(N^{-\rho}h),
		\]
		for some $c_1$ and $c_2$ which are positive and do not depend on $N^{\gamma}$ and $T$.

		Turning to the third term of \eqref{eq:3terms}, we have the following lemma.
		
\begin{lemma}\label{lemma:third_term}
	\begin{align*}&\displaystyle \textsf{Var}\left( \int_0^t F_i^N(Z_h^N(s)) - F_i^N(Z_h^N(\eta(s)))ds \right)\\
 &\hspace{.5in}\le d\hat{C}_1 \cdot (TN^{\gamma})^2 N^{-\rho}(N^{\gamma}h)^2+d \hat{C}_2 \cdot (T N^{\gamma})^2N^{-\rho}N^{\gamma}h, 
            \end{align*}
 where $\hat{C}_1$ is a constant defined via \eqref{eq:bc}, and which  depends upon the product $T N^\gamma$, and  $\hat{C}_2$ is a constant independent of $N,\gamma$ and $T$.
 \end{lemma}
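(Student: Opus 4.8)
The plan is to exploit the fact that the variance is unchanged if we subtract from the integrand the analogous \emph{deterministic} quantity built from the Euler path $z_h^N$ of \eqref{gen_approx_ode}. Set
\[
  G(s) \eqdef \left[F_i^N(Z_h^N(s)) - F_i^N(Z_h^N(\eta(s)))\right] - \left[F_i^N(z_h^N(s)) - F_i^N(z_h^N(\eta(s)))\right].
\]
Since $z_h^N$ is deterministic, $\textsf{Var}(\int_0^t F_i^N(Z_h^N(s)) - F_i^N(Z_h^N(\eta(s)))\,ds) = \textsf{Var}(\int_0^t G(s)\,ds)$, and bounding the variance by the second moment and then applying Cauchy--Schwarz gives $\textsf{Var}(\int_0^t G(s)\,ds) \le t\int_0^t \E[G(s)^2]\,ds$. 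So it suffices to bound $\E[G(s)^2]$, uniformly for $s \le T$, by a constant (allowed to depend on $N^\gamma T$) times $N^{-\rho}((N^\gamma h)^2 + N^\gamma h)$, and then integrate. The point of passing to $G$ is that the drift part of the one-step increment of $Z_h^N$, namely $(s-\eta(s))F^N(Z_h^N(\eta(s)))$, is genuinely of order $N^\gamma h$ and carries no hidden factor $N^{-\rho}$; subtracting the (zero-variance) Euler truncation error removes exactly this contribution, leaving only terms controllable by $N^{-\rho/2}$-small quantities.

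To bound $|G(s)|$ I would express each of the two brackets by the integral form of the mean value theorem, e.g.\ $F_i^N(Z_h^N(s)) - F_i^N(Z_h^N(\eta(s))) = \int_0^1 \nabla F_i^N(Z_h^N(\eta(s)) + u\,\delta_Z(s))\cdot\delta_Z(s)\,du$ with $\delta_Z(s) \eqdef Z_h^N(s)-Z_h^N(\eta(s))$, and likewise for the $z_h^N$ bracket with $\delta_z(s) \eqdef (s-\eta(s))F^N(z_h^N(\eta(s)))$. Subtracting, adding and subtracting $\nabla F_i^N(Z_h^N(\eta(s))+u\,\delta_Z(s))\cdot\delta_z(s)$, and using that under the running assumption $F_i^N$ and $\nabla F_i^N$ are Lipschitz with constant $O(N^\gamma)$ (bounded gradient and Hessian), together with the identity $\delta_Z(s)-\delta_z(s) = (s-\eta(s))(F^N(Z_h^N(\eta(s)))-F^N(z_h^N(\eta(s)))) + \Xi(s)$ and the bound $|\delta_z(s)| \le \sqrt d\, C N^\gamma h$, I obtain a pointwise estimate of the form
\[
  |G(s)| \le K_1\, N^{2\gamma} h\,(1+N^\gamma h)\, \sup_{r\le T}|Z_h^N(r)-z_h^N(r)| + K_2\, N^\gamma (1+N^\gamma h)\,|\Xi(s)|,
\]
where $\Xi(s) \eqdef \delta_Z(s) - (s-\eta(s))F^N(Z_h^N(\eta(s)))$ is the centered Poisson increment of $Z_h^N$ over the current sub-interval and $K_1,K_2$ depend only on $d$ and $C$.

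The two terms would then be bounded by, respectively, Lemma~\ref{lem:main_var_bound_Z}, which gives $\E[\sup_{r\le T}|Z_h^N(r)-z_h^N(r)|^2] \le C_1 e^{C_2(TN^\gamma)^2}(TN^\gamma N^{-\rho})$, and a direct computation of $\E[|\Xi(s)|^2]$: conditional on the history up to time $\eta(s)$, the increments $\tilde Y_k(A_k(s)) - \tilde Y_k(A_k(\eta(s)))$, with $A_k(u) = N^\gamma\int_0^u N^{c_k}\lambda_k^N(Z_h^N(\eta(v)))\,dv$ as in \eqref{eq:RTC_tau_scaled}, are independent centered Poisson increments over the conditionally deterministic times $A_k(s)-A_k(\eta(s)) = N^\gamma N^{c_k}(s-\eta(s))\lambda_k^N(Z_h^N(\eta(s)))$, so that $\E[|\Xi(s)|^2] = N^\gamma(s-\eta(s))\,\E[\sum_k N^{c_k}|\zeta_k^N|^2\lambda_k^N(Z_h^N(\eta(s)))] \le \kappa_0\, N^{-\rho}N^\gamma h$, the last step using $c_k \le \rho_k$, $|\zeta_k^N| = O(N^{-\rho_k})$, and $\sum_k \lambda_k^N \le C$. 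Squaring the estimate for $|G(s)|$, taking expectations, integrating via $t\int_0^t (\cdot)\,ds \le T^2 \max_s(\cdot)$, and using $T^2 N^{2\gamma} = (N^\gamma T)^2$ together with $h \le T$ (the latter to absorb the surplus factors $1+N^\gamma h$ and the extra powers of $N^\gamma h$ into powers of $1+N^\gamma T$) would then yield the claimed bound: $\hat{C}_2$ arises from the leading $N^\gamma|\Xi(s)|$ term and equals the network-dependent constant $\kappa_0$, independent of $N,\gamma,T$, while $\hat{C}_1$ absorbs the $d$- and $C$-dependent constants along with the polynomial and exponential factors in $N^\gamma T$ produced by Lemma~\ref{lem:main_var_bound_Z}.

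The hard part, and the source of the improvement over \cite{AndHigh2012}, is getting the factor $N^{-\rho}$ onto the $(N^\gamma h)^2$ term, and I expect this to be the only genuinely delicate point. The naive route, namely Taylor-expanding $F_i^N(Z_h^N(s))$ directly about $Z_h^N(\eta(s))$ and then bounding the variance of the resulting integral by its second moment, fails: $\E[|Z_h^N(s)-Z_h^N(\eta(s))|^2]$ contains a $(N^\gamma h)^2$ term with no factor $N^{-\rho}$ (coming from the drift part of the increment), and this survives into the variance estimate. Subtracting the deterministic Euler truncation error is precisely what removes that term: what remains in $G(s)$ is controlled by the global error $Z_h^N-z_h^N$ and the centered local increment $\Xi(s)$, both $O(N^{-\rho/2})$ in $L^2$ by Lemma~\ref{lem:main_var_bound_Z} and the Poisson computation above. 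Everything else is bookkeeping: checking that each cross term distributes correctly between the $(N^\gamma h)^2$ and $N^\gamma h$ parts of the bound, and that only factors of $N^\gamma T$, never bare powers of $N$, accumulate in $\hat{C}_1$.
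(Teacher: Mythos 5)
Your proof is correct, but it takes a genuinely different route from the paper's. The paper stays at the level of variances throughout: it writes the increment via Lemma~\ref{lem:taylor} as a product $A^{N,h}B^{N,h}$ of the averaged gradient and the one-step increment, bounds $\textsf{Var}(A^{N,h})=O(N^{-\rho}N^{2\gamma})$ through Lemma~\ref{lem:orderv} (the gradient along $Z_h^N$ is $L^2$-close to the gradient along $z_h^N$), computes $\textsf{Var}(B^{N,h})$ by the law of total variance in \eqref{eq:24089572} (conditional Poisson variance of order $N^{-\rho}N^\gamma h$, plus the variance of the conditional drift, which is $h^2N^{2\gamma}$ times the $O(N^{-\rho})$-small variance of the frozen intensity), and then combines these with the product-variance estimate of Lemma~\ref{lem:Var_bound}. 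You instead center the integrand by its deterministic Euler analogue, which leaves the variance unchanged, and only then pass to a second moment; your decomposition $\delta_Z-\delta_z=(s-\eta(s))\bigl(F^N(Z_h^N(\eta(s)))-F^N(z_h^N(\eta(s)))\bigr)+\Xi(s)$ isolates exactly the two quantities carrying the factor $N^{-\rho}$, namely the global error $Z_h^N-z_h^N$ (Lemma~\ref{lem:main_var_bound_Z}) and the centered Poisson increment $\Xi$, whose conditional variance computation is the same one the paper performs inside \eqref{eq:24089572}. Both arguments rest on the same two ingredients and yield a bound of the same form, and your diagnosis of why the naive second-moment route fails (the uncentered drift part of the increment contributes $(N^\gamma h)^2$ with no $N^{-\rho}$) is precisely the obstruction the paper's law-of-total-variance step is designed to circumvent. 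What your route buys is economy: one almost-sure bound on $|G(s)|$ followed by two $L^2$ estimates, with no need for Lemma~\ref{lem:Var_bound}. What the paper's route buys is reusability: Lemmas~\ref{lem:orderv} and~\ref{lem:Var_bound} are deployed again, essentially verbatim, for the middle term of \eqref{eq:3terms} and for the reduction of general $f$ to the coordinate case. The only caveat is cosmetic: your constants are not literally those defined in \eqref{eq:bc}, though they have the same character --- $\hat C_1$ exponential in $(TN^\gamma)^2$ through Lemma~\ref{lem:main_var_bound_Z}, and $\hat C_2$ free of $N$, $\gamma$, $T$ provided you route the cross term $(N^\gamma h)\cdot N^\gamma|\Xi(s)|$ into the $(N^\gamma h)^2$ bucket rather than the $N^\gamma h$ bucket, as you indicate you would.
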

		
		\begin{proof}
      From Lemma \ref{lem:taylor} in the appendix, we have
       \begin{align}
       \begin{split}
       F_i^N(Z_h^N(s)) &- F_i^N(Z_h^N(\eta(s)) \\
       &=\int_0^1 \nabla F_i^N(Z_h^N(\eta(s)) + r(Z_h^N(s)-Z_h^N(\eta(s))))dr\, \cdot\, (Z_h^N(s)-Z_h^N(\eta(s))).
       \end{split}
       \label{eq:crazy23423}
       \end{align}
      In order to bound the right hand side of \eqref{eq:crazy23423},  we will apply Lemma \ref{lem:Var_bound} in the appendix with $A^{N,h}$ as the $j$th component of $\int_0^1 \nabla F_i^N(Z_h^N(\eta(s)) + r(Z_h^N(s)-Z_h^N(\eta(s))))dr$ and $B^{N,h}$ as the $j$th component of $(Z_h^N(s)-Z_h^N(\eta(s)))$.  Hence, we must find appropriate bounds on these components.

      We begin with  $Z_h^N(s)-Z_h^N(\eta(s))$.  As
       \begin{equation}
	Z_h^N(s) = Z^N(0) + \sum_kY_{k}\left(N^{\gamma}N^{c_k}\int_0^t \lambda_k^N(Z^N_h(\eta(s)))ds\right)\zeta_k^N,
       \end{equation}
       we see
       \begin{align*}
	     Z_h^N(s)-Z_h^N(\eta(s)) &= \sum_k Y_{k}\left(N^{\gamma}N^{c_k}\int_0^s \lambda_k^N(Z^N_h(\eta(s)))ds\right)\zeta_k^N\\
	     &\hspace{.4in}- \sum_kY_{k}\left(N^{\gamma}N^{c_k}\int_0^{\eta(s)} \lambda_k^N(Z^N_h(\eta(s)))ds\right)\zeta_k^N\\
         &\overset{d}{=}\sum_k{Y_k}'\left(N^{\gamma}N^{c_k}\int^s_{\eta(s)} \lambda_k^N(Z^N_h(\eta(s)))ds\right)\zeta_k^N, 
       \end{align*}
       where the collection $\{{Y_k}'\}$ are independent unit-rate Poisson processes and the last equality is in the sense of distributions. This implies
       \begin{align*}
    	   \left|\E [ Z^N_h(s)-Z^N_h(\eta(s))]\right| &= \left| \E \left[ \E [ Z^N_h(s)-Z^N_h(\eta(s))| Z^N_h(\eta(s)) ]\right] \right| \leq CN^{\gamma}h.
       \end{align*}
             Similarly, using Lemma \ref{lem:main_var_bound_Z} and  the law of total variance, we may conclude
         \begin{align}
       \label{eq:24089572}
       \begin{split}
       \textsf{Var}&(Z^N_{h,j}(s)-Z^N_{h,j}(\eta(s))) =\E\left[\textsf{Var}(Z^N_{h,j}(s) - Z^N_{h,j}(\eta(s)))\ |\ Z^N_h(\eta(s)))\right]\\
       &\hspace{.9in} +\textsf{Var}\left(\E[Z^N_{h,j}(s)- Z^N_{h,j}(\eta(s))]\ |\ Z^N_h(\eta(s))\right)\\
       &=\E\left[\textsf{Var}\left(\sum_k\tilde{Y}_{k}\left(N^{\gamma}N^{c_k}\int^s_{\eta(s)} \lambda_k^N(Z^N_h(\eta(s)))ds\right)\zeta_{kj}^N\ \bigg|\ Z^N_h(\eta(s))\right)\right]\\
       &\hspace{.2in}+\textsf{Var}\left(\E\left[\sum_k\tilde{Y}_{k}\left(N^{\gamma}N^{c_k}\int^s_{\eta(s)} \lambda_k^N(Z^N_h(\eta(s)))ds\right)\zeta_{kj}^N\  \bigg|\ Z^N_h(\eta(s))\right]\right)\\
       &=\sum_k (s-\eta(s)) N^{\gamma}N^{c_k}\E\left[ \lambda_k^N(Z^N_h(\eta(s)))\right] (\zeta_{kj}^N)^2\\
       &\hspace{.2in}+\textsf{Var}\left(\sum_k (s-\eta(s))N^{\gamma}N^{c_k} \lambda_k^N(Z^N_h(\eta(s)))\zeta^N_{kj} \right)\\
       &\leq C
       N^{-\rho}N^{\gamma}h+h^2N^{2\gamma} K\sum_k\textsf{Var}\left(\lambda_k^N(Z^N_h(\eta(s)))\right)\\
       &\le C
       N^{-\rho}N^{\gamma}h+h^2N^{2\gamma}K\sum_k\E \left[\left|\lambda_k^N(Z^N_h(\eta(s))-\lambda_k^N(z^N_h(\eta(s)))\right|^2\right]\\
       &\le CN^{-\rho}N^{\gamma}h+dN^{2\gamma} K^2C^2 \left(C_1e^{C_2\cdot (TN^{\gamma})^2}\right)\left(TN^{\gamma}N^{-\rho}\right)h^2,
       \end{split}
        \end{align}
        where we recall that $K$ is the number of reaction channels, and Lemma \ref{lem:main_var_bound_Z} was utilized in the final inequality.

        Turning to $\int_0^1 \nabla F_i^N(Z_h^N(\eta(s)) + r(Z_h^N(s)-Z_h^N(\eta(s))))dr$, we  apply Lemma \ref{lem:orderv} in the appendix with  
  $X_1(s)=Z_h^N(s)$, $X_2(s)=Z^N_h(\eta(s))$, $x_1(s)=z^N_h(s)$, $x_2(s)=z^N_h(\eta(s))$ and $u(x)=[\nabla F_i^N(x)]_j$
          to obtain
           \begin{align}
           \textsf{Var} &\left( \int_0^1 [\nabla F_i^N(Z^N_h(\eta(s)) + r(Z^N_h(s)-Z^N_h(\eta(s)))]_jdr \right)\le  N^{2\gamma} \bar{C}N^{-\rho} \notag,
        \end{align}
        where
        \begin{align}
        \label{eq:Cbar}
        \bar{C}= dC^2 C_1 e^{2C_2 \cdot (TN^\gamma)^2} (TN^{\gamma} N^{-\rho}).
        \end{align}
        In order to apply Lemma \ref{lem:Var_bound} with
        $$A^{N,h}=\int_0^1 [\nabla F_i^N(Z^N_h(\eta(s)) + r(Z^N_h(s)-Z^N_h(\eta(s))))]_jdr$$
   and $B^N =Z_{h,j}^N(s) - Z_{h,j}^N(\eta(s))$, we  note $\left |[\nabla F_i^N]_j \right|\le CN^\gamma$ from our running assumptions to see
   \begin{align}
   &\textsf{Var}\left( \int_0^1 [\nabla F_i^N(Z^N_h(\eta(s)) + r(Z^N_h(s)-Z^N_h(\eta(s))))]_jdr\cdot (Z_{h,j}^N(s)- Z_{h,j}^N(\eta(s)))\right)\notag\\
   &\hspace{.3in} \leq 3C^2\bar{C}N^{2\gamma}N^{-\rho}(N^{\gamma}h)^2  +15C^2N^{2\gamma}\textsf{Var}( Z_{h,j}^N(s) - Z_{h,j}^N(\eta(s)))\notag\\
   &\hspace{.3in}\leq  \hat{C}_1N^{2\gamma}N^{-\rho}(N^{\gamma}h)^2+\hat{C}_2N^{2\gamma}N^{-\rho}N^{\gamma}h,
   \end{align}
   where the final inequality follows from \eqref{eq:24089572} with
   \begin{align}
   \label{eq:bc}
   \begin{split}
   \hat{C}_1&=3C^2 \bar{C} + 15dK^2 C^4 C_1 e^{C_2\cdot (TN^\gamma)^2} TN^\gamma,\\
   \hat{C}_2 &= 15 C^3.
   \end{split}
   \end{align}

   Returning to \eqref{eq:crazy23423}, the above allows us to conclude
   \begin{align*}
   \textsf{Var}(F_i^N(Z^N_h(s))& - F_i^N(Z^N_h(\eta(s)))\\
   &\le d^2\hat{C}_1N^{2\gamma}N^{-\rho}(N^{\gamma}h)^2+d^2\hat{C}_2N^{2\gamma}N^{-\rho}N^{\gamma}h.
   \end{align*}
   Finally, by Lemma \ref{lem:Var_int_bound} in the Appendix,
   \begin{align*}
   \textsf{Var} &\left(\int_0^t F_i^N(Z^N_h(s))- F_i^N(Z^N_h(\eta(s)) ds\right)\\
   &\le d^2T^2\hat{C}_1N^{2\gamma}N^{-\rho}(N^{\gamma}h)^2+d^2T^2\hat{C}_2N^{2\gamma}N^{-\rho}N^{\gamma}h, 
   \end{align*}
   as desired.
   \end{proof}

		We now turn to the middle term of \eqref{eq:3terms}. We first  write
		\begin{align*}
			F_i^N(X^N(s)) -& F_i^N(Z^N_h(s)) = DF_i^N(s)\cdot (X^N(s) - Z^N_h(s)) ,
		\end{align*}
		where
		\[
			DF_i^N(s) = \int_0^1  \nabla F_i^N(Z^N_h(s) + r( X^N(s)-Z^N_h(s))) dr.
		\]
		We will again apply Lemma \ref{lem:Var_bound} to get the necessary bounds.  Therefore, we let
		 $A^N=[DF_i^N(s)]_j$ and $B^N=X^N_j(s) - Z^N_{h,j}(s)$.
				
        Letting $X_1(s)=X^N(s)$, $X_2(s)=Z^N(s)$, $x_1(s)=x^N(s)$, $x_2(s)=z^N_h(s)$ and $u(x)=[\nabla F_i^N(x)]_j$ for an application of Lemma \ref{lem:orderv},  we have
        \begin{align}
        \textsf{Var}(A^N)
            &\leq  N^{2\gamma}\bar{C}N^{-\rho}\notag,
        \end{align}
        where $\bar C$ is defined in \eqref{eq:Cbar} and
        where we use our running assumption that $[\nabla F_i]_j$ is uniformly bounded by $CN^{\gamma}$.
		From \cite[Lemma~3]{AndHigh2012} we know
        \begin{equation*}
     	\E\left[ |B^N | \right] \leq  \tilde{c}_1(e^{\tilde{c}_2N^{\gamma}T}-1)N^{\gamma}h,
        \end{equation*}
        where $\tilde{c}_1$ and $\tilde{c}_2$ are constants independent of $N,\gamma$ and $T$.
 Hence, applying Lemma \ref{lem:Var_bound} we see
        \begin{align*}
          \textsf{Var}&([DF_i^N(s)]_j(X^N_j(s) - Z_{h,j}^N(s)))\\
          &\leq 3N^{2\gamma}\bar{C}\tilde{c}_1^2(e^{\tilde{c}_2N^{\gamma}T}-1)^2N^{-\rho}(N^{\gamma}h)^2+15C^2N^{2\gamma}\textsf{Var}(X^N_j(s) - Z_{h,j}^N(s)),
        \end{align*}
        and
		\[
			\textsf{Var}(F_i^N(X^N(s)) - F_i^N(Z_h^N(s))) \le 15C^2 dN^{2\gamma}\sum_{j=1}^d \textsf{Var}(X^N_j(s) - Z_{h,j}^N(s))+d^2\bar{c}_1 N^{2\gamma} N^{-\rho}(N^{\gamma}h)^2,
		\]
		where
        \begin{align}
        \label{eq:lc}
        \bar{c}_1= 3\bar C \tilde c_1^2(e^{\tilde c_2 TN^\gamma}- 1)^2.
        \end{align}
		
		Finally returning to \eqref{eq:3terms}, we may combine all of the above to see
		\begin{align*}
			&\textsf{Var}(X^N_i(t)- Z_{h,i}^N(t)) \le 3 c_1Te^{c_2N^{\gamma}T}N^{2\gamma}(N^{-\rho}h) \\
			&\hspace{.2in}+ 
			3 \left[ 15C^2 dN^{2\gamma}t\sum_{j=1}^d \int_0^t \textsf{Var}(X^N_j(s) - Z_{h,j}^N(s))\; ds +d^2 \bar{c}_1 N^{2\gamma} N^{-\rho}(N^{\gamma}h)^2T^2\right]\tag{Lemma \ref{lem:Var_int_bound}}\\
			&\hspace{.2in} + 3\left[d^2\hat{C}_1 \cdot (TN^{\gamma})^2 N^{-\rho}(N^{\gamma}h)^2+d^2 \hat{C}_2 \cdot (T N^{\gamma})^2N^{-\rho}N^{\gamma}h\right]\tag{Lemma \ref{lemma:third_term}}
			\\[1ex]
&\leq 3d^2T^2(\hat{C}_1+\bar{c}_1)N^{2\gamma}N^{-\rho}(N^{\gamma}h)^2+3(d^2T^2\hat{C}_2N^{2\gamma}+c_1N^{\gamma}Te^{c_2N^{\gamma}T})N^{-\rho}N^{\gamma}h\\
&\quad + 45 C^2TdN^{2\gamma}\sum_{j=1}^{d} \int_0^t \textsf{Var}(X^N_j(s) - Z_{h,j}^N(s)) ds.
		\end{align*}
		Thus, setting
		\[
			g(t) = \max_{i\in \{1,\dots,d\}} \{ \textsf{Var}([X^N(t)]_i - [Z_h^N(t)]_i)\},
		\]
		we have
		\begin{align*}
			g(t) &\leq 3d^2T^2(\hat{C}_1+\bar{c}_1)N^{2\gamma}N^{-\rho}(N^{\gamma}h)^2+3(d^2T^2\hat{C}_2N^{2\gamma}+c_1N^{\gamma}Te^{c_2N^{\gamma}T})N^{-\rho}N^{\gamma}h\\
&\quad + 45 C^2Td^2N^{2\gamma} \int_0^t g(s) ds,
		\end{align*}
		and the result under the assumption that $f_i(x) = x_i$ is shown by an application of Gronwall's inequality,
		\begin{align}
          	\textsf{Var}(X^N_i(s) - Z_{h,i}^N(s))      &\leq 3e^{45 C^2T^2d^2N^{2\gamma}}d^2T^2(\hat{C}_1+\bar{c}_1)N^{2\gamma}N^{-\rho}(N^{\gamma}h)^2\notag\\
	&\hspace{.2in}+ 3e^{45 C^2T^2d^2N^{2\gamma}}(d^2T^2\hat{C}_2N^{2\gamma}+c_1N^{\gamma}Te^{c_2N^{\gamma}T})N^{-\rho}N^{\gamma}h\notag\\
	&= \tilde{C}_1 \cdot N^{-\rho} (N^\gamma h)^2 + \tilde{C}_2 \cdot N^{-\rho} N^\gamma h,\label{eq:2409857}
        \end{align}
        where $\tilde{C}_1$ and $\tilde{C}_2$ are defined by the above equality.
		
		To show the general case, note that from Lemma \ref{lem:taylor} in the appendix we have
		\begin{align*}
			f(X^N(t)) - f(Z^N_h(t)) &= \int_0^1 \nabla f(Z^N_h(t) + r(X^N(t)-Z^N_h(t)))dr\cdot (X^N(t)-Z^N_h(t))\notag.
		\end{align*}
          We let $X_1(t)=X^N(t)$, $X_2(t)=Z^N_h(t)$, $x_1(t)=x^N(t)$, $x_2(t)=z^N_h(t)$ and $u(x)= [\nabla f(x)]_j$ for an application of Lemma \ref{lem:orderv}, which yields
          \begin{align}
        &\textsf{Var}\left(\int_0^1 [\nabla f(Z^N_h(t) + r(X^N(t)-Z^N(t)))]_jdr\right) \leq  dM^2 \bar{C}N^{-\rho},
        \end{align}
        where $M$ is the uniform bound of $\nabla_{ij} f(x)$ and the factor of $d$ appears in the application of Lemma \ref{lem:orderv} since $|[\nabla f(x)]_j - [\nabla f(y)]_j| \le M \sqrt{d} |x-y|$ for each $j$.
 Hence, by an application of Lemma \ref{lem:Var_bound} and the work above we see,
        \begin{align}
          \textsf{Var}&\left(\int_0^1 \nabla_j f(Z^N_h(t) + r(X^N(t)-Z^N_h(t)))dr\cdot(X^N_j(s) - Z_{h,j}^N(s))\right)\notag\\
          &\leq  dM^2 \bar{c}_1N^{-\rho} (N^{\gamma}h)^2+15M^2\textsf{Var}(X^N_j(s) - Z_{h,j}^N(s))\notag.
        \end{align}
  Thus, utilizing \eqref{eq:2409857}, we have
        \[
          \textsf{Var}(f(X^N(t)) - f(Z^N_h(t)) ) \leq \bar{C_1} \cdot N^{-\rho}(N^{\gamma}h)^2+\bar{C_2}\cdot N^{-\rho}N^{\gamma}h,
        \]
        where
        \begin{align}
        \label{eq:C1}
        \bar{C_1}= 15d^2M^2\tilde C_1+d^3m^2 \bar{c}_1,
        \end{align}
        and
         \begin{align}
         \label{eq:C2}
        \bar{C_2}=15d^2M^2 \tilde C_2.
        \end{align}
        Note that  the functional dependence of  $\bar C_1$ and $\bar C_2$ on $N$ and $T$ is via the product $N^\gamma T$.
	\end{proof}

\section{Consequences for Complexity}
\label{sec:complexity}

In the present section we assume that $\gamma \le 0$ (which holds, for example, in the classical scaling).  We further assume that $\bar{C_1}$ is not significantly larger than $\bar{C_2}$, where $\bar{C_1}$ and $\bar{C_2}$ are the constants presented in Theorem~\ref{thm:var}.   As detailed in the discussion following Theorem~\ref{thm:var}, under these assumptions our upper bounds on the variances given  in Theorem \ref{thm:var} and Corollary~\ref{thm:cor} are well approximated
by constants multiplied by $N^{-\rho} (N^{\gamma} h)$.   
This scaling is verified numerically on an example in Section \ref{sec:comp}.   Theoretically demonstrating the validity of the assumption pertaining to the relative sizes of the constants will require a finer analysis  than is carried out here.


Our theoretical results, combined with the assumptions outlined above, imply
that the variances in the level-wise estimators
$
\Ql_{\ell}
$
and
$\Ql_E$
appearing in
(\ref{eq:QL_real})
and
(\ref{ub3})
satisfy
\begin{equation}
\textsf{Var}( \Ql_{\ell} ) \le C\cdot \frac{N^{-\rho} N^\gamma h_{\ell}} {n_{\ell}}
   \qquad
 \mathrm{and}
  \qquad
 \textsf{Var}( \Ql_E ) \le C\cdot \frac{N^{-\rho} N^\gamma h_{L}} {n_{E}},
\label{eq:vqle}
   \end{equation}
where  $C$ is a generic constant.
Compared with the original analysis in
\cite[Theorems~1~and~2]{AndHigh2012}, the important refinement is deletion of the $h_\ell^2$ terms found in the similar bounds of \cite{AndHigh2012}.  These $h_\ell^2$ terms
dominate in most cases, including when $h_\ell > N^{-1}$ and the system satisfies the classical scaling (recall that $\rho = 1$ in the classical scaling).

Further,
in the basic inequality
\[
\textsf{Var}( f(Z^N_{\ell_{0}}(t)) ) \le \left(   \sqrt{ \textsf{Var}( f(Z^N_{\ell_{0}}(t)) -
  f( X^N(t))) } +
       \sqrt{ \textsf{Var}(f( X^N(t))) }  \right)^2
\]
we may use
Theorem~\ref{thm:var}
with $h = h_{\ell_{0}}$ 
to control
$ \textsf{Var}( f(Z_{\ell_{0}}) - f(X^N)) ) $
and
Lemma~\ref{lem:main_var_bound_X}
to control
$\textsf{Var}( f(X^N) )$, to find that
\begin{equation}
\textsf{Var}( \Ql_{0} ) \le C\cdot \frac{N^{-\rho} N^\gamma} {n_{0}}.
\label{eq:vql}
\end{equation}
Taking a variance in
(\ref{eq:estimator_real})
it then follows from
 (\ref{eq:vqle})
and
(\ref{eq:vql})
that to leading order
\[
\textsf{Var}(
\Ql_{B}
)
 \le C  N^{-\rho} N^\gamma
  \left( \frac{1}{{n_{0}}}   +
 \sum_{\ell = \ell_0+1}^L
   \frac{h_{\ell}}{n_{\ell}} \right).
\]

To achieve the required overall variance of $\epsilon^2$, and to spread the variance budget fairly evenly across the levels,
 we may then use, to order of magnitude,
\begin{equation}
    n_0 =  N^{-\rho} N^\gamma (L-\ell_0+1) \epsilon^{-2}
\qquad
 \mathrm{and}
\qquad
 n_{\ell} =  N^{-\rho} N^\gamma (L - \ell_0+1) h_{\ell} \epsilon^{-2}.
\label{eq:nvals}
\end{equation}
For the biased estimator
(\ref{eq:estimator_real})
we take $h_L = O(\epsilon)$ in order for the discretization error to be
within our target accuracy.
This gives
$L = O(|\ln(\epsilon)|)$ levels.
The computational cost of each pair of
tau-leap paths is proportional to $ h_{\ell}^{-1}$, and hence the overall complexity is
of magnitude
\begin{equation}
 n_0  h_{\ell_0}^{-1} +
            \sum_{\ell = \ell_0+1}^L
                             n_\ell h_{\ell}^{-1}
  =
   N^{-\rho} N^\gamma   \epsilon^{-2} \left(  (L-\ell_0 + 1) h_{\ell_0}^{-1} + (L-\ell_0+1)(L-\ell_0) \right).
\label{eq:mag1}
\end{equation}
Based on this analysis, which we recall is performed under the assumption that $\gamma \le 0$, we take 
$\ell_0 = 0$.
Doing so yields a computational complexity of leading order
\[
	N^{-\rho}N^\gamma \epsilon^{-2} \ln(\epsilon)^2.
\]

Still assuming that $\ell_0 = 0$, for the unbiased estimator
(\ref{eq:unbiased_MLMC})
we may take, to leading order of magnitude,
\begin{equation*}
    n_0 =  N^{-\rho} N^\gamma (L+2) \epsilon^{-2},
\ \
\ \ \ 
 n_{\ell} =  N^{-\rho} N^\gamma (L +2) h_{\ell} \epsilon^{-2}, \ \ \
 \mathrm{and}
\ \ \  
n_{E} =  N^{-\rho} N^\gamma (L+2)  h_{L} \epsilon^{-2}.
\end{equation*}
If we again use $h_L = O(\epsilon)$, then the computational complexity for the unbiased method
is bounded in magnitude by
\begin{equation}
   N^{-\rho} N^\gamma   \epsilon^{-2} \ln(\epsilon)^2  + \max\{N^{-\rho}N^{\gamma}|\log(\epsilon)|\cdot h_L \epsilon^{-2} , 1\}\cdot \overline N,
\label{eq:mag2}
\end{equation}
  where we recall
that
$\overline{N }$, defined in
(\ref{eq:Nbar}),
is the cost of computing a sample path with the exact method, and we note that the maximum appears because some exact paths are necessarily computed for the unbiased method.  We also note that we are implicitly assuming that $h_L^{-1}$ is not appreciably larger than $\overline N$, which we believe is reasonable.   Finally, while we computed the above under the assumption that $h_L  = O(\epsilon)$, we kept $h_L \epsilon^{-2}$ in the second term of \eqref{eq:mag2}  instead of simply writing $\epsilon^{-1}$ in order to explicitly point out the dependence on each term.


We note that the complexity bounds derived in \cite{AndHigh2012}, which considered only the $L^2$ norm of the difference of the coupled processes,  have another term  of order $\max\{N^{2\gamma} h_L^2 \epsilon^{-2}, 1\}\cdot \overline N$ added to \eqref{eq:mag2}.   This term was often the dominating one, and has been removed by the direct analysis on the variance presented here.

To finish this section we point out that the
analysis produces upper bounds on the computational complexity---in particular, the choices
for the number of samples paths per level 
 are sufficient to achieve the required accuracy, based on bounds on the individual variances
and with the strategy of spreading the cost evenly between levels, but we have not shown that they are optimal.
In practice, and as described more fully in
\cite{AndHigh2012},
for a given problem,  and
with a small amount of further computation,
it is possible to perform an initial optimization in order to choose these key parameters
adaptively. Hence, practical performance may outstrip these complexity bounds.

\section{Computational Test}
\label{sec:comp}

The efficiency of multilevel Monte Carlo tau-leaping was demonstrated computationally 
in \cite{AndHigh2012}, so we restrict ourselves here to 
 testing the sharpness of our new analytical results
on a simple non-linear model.  We consider 
a particular case from the family of models presented in Example~\ref{example:running1}.

\begin{example}

Consider 
the case where 
$M = 0.2$ in   the model of
 Example~\ref{example:running1}, defined through 
 \eqref{eq:coupled_example}.
 \begin{figure}[t]
\centering
	\subfigure[Varying $N$ with $h=0.001$ fixed.]{\includegraphics[width=0.47\textwidth]{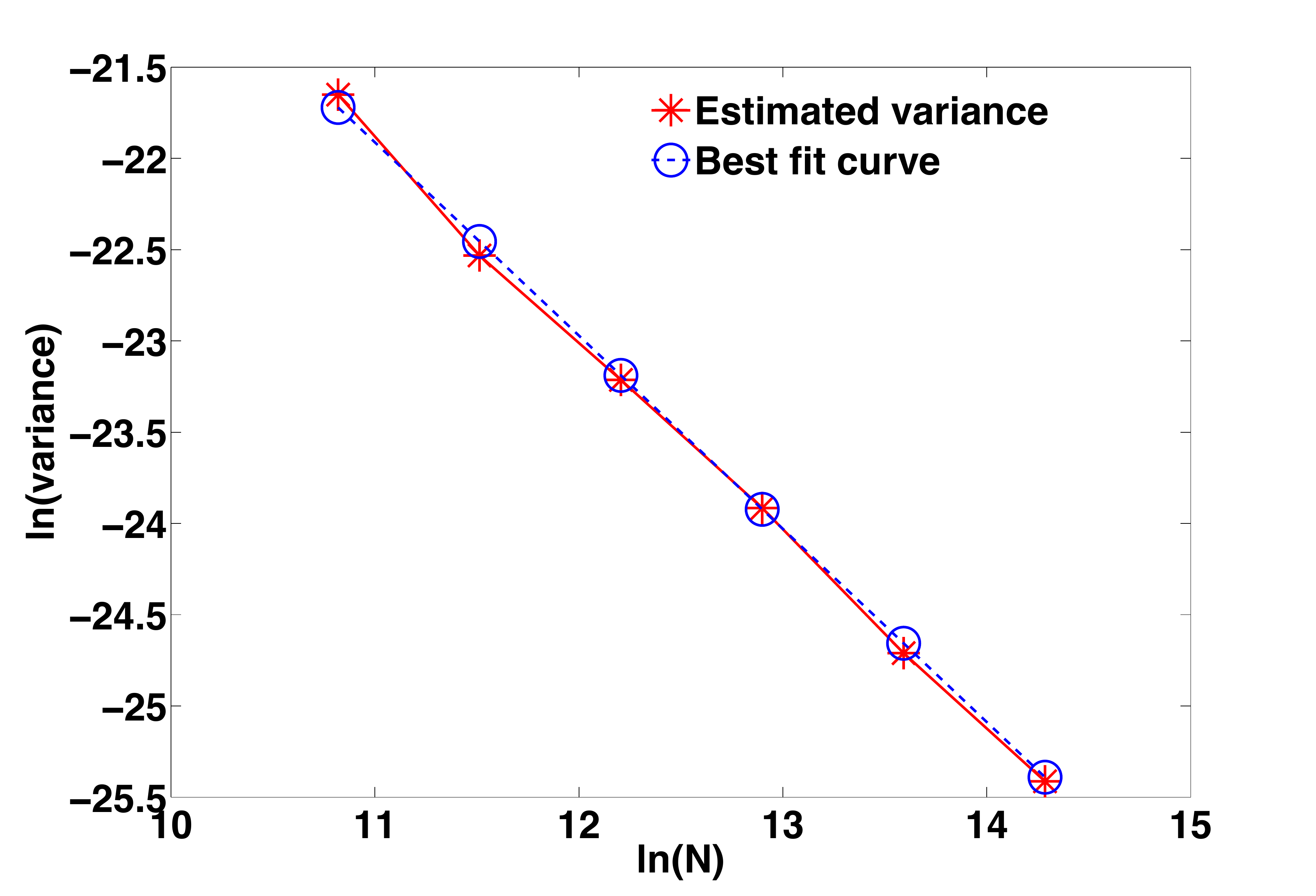}}\qquad
	\subfigure[Varying $h$ with $N = 10^6$ fixed.]{\includegraphics[width=0.47\textwidth]{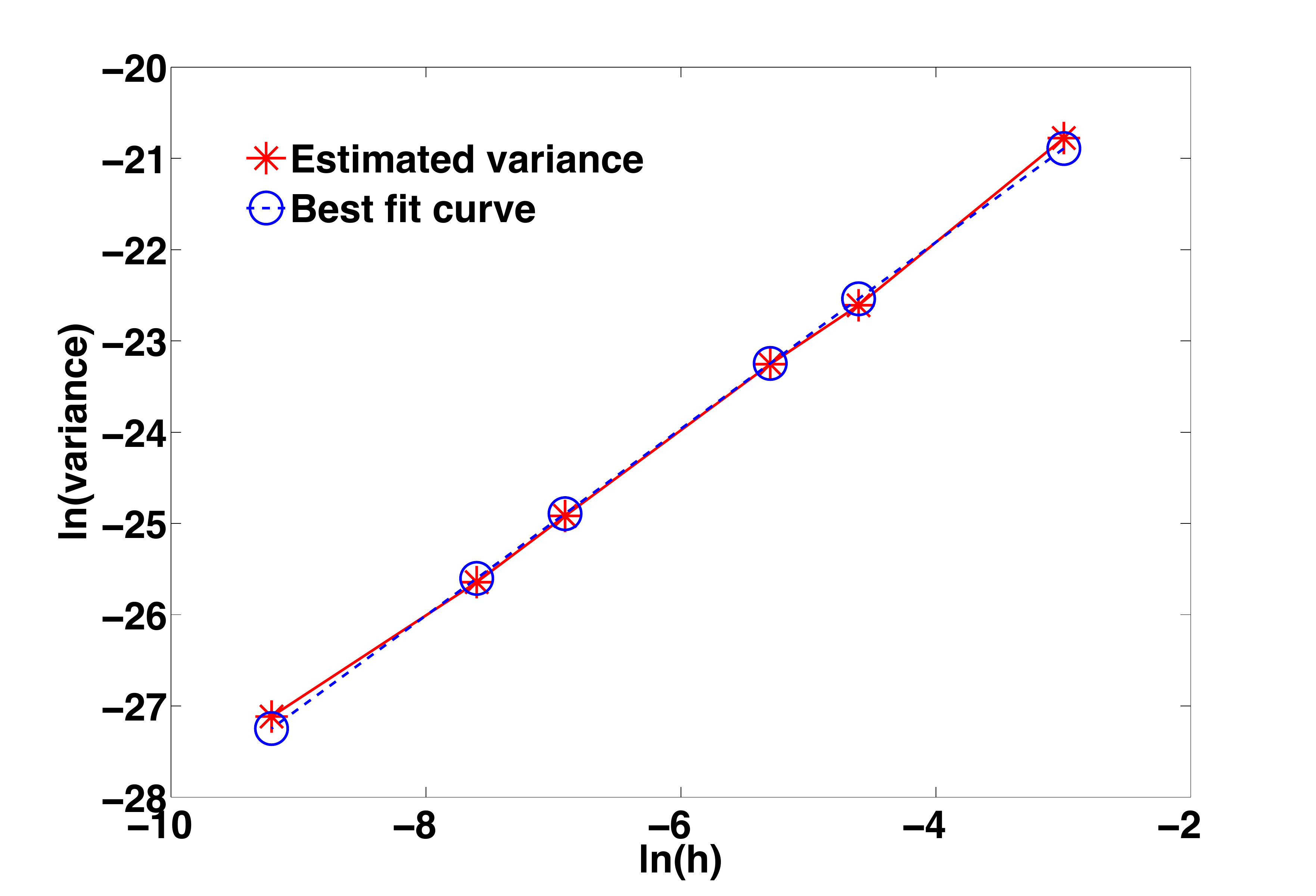}}
	\caption{Log-log plots of $\textsf{Var}(X_1^N(t) - Z_{h,1}^N(t))$ for the model in Example \ref{example:simple}.  In (a), $h$ is held constant while $N$ is changed.  In (b), $N$ is fixed while $h$ is varied.  The best fit curve for all the data is overlain in the dashed blue line.}
	\label{fig:example_simple}
\end{figure}
In Figure~\ref{fig:example_simple}, we provide log-log plots of $\textsf{Var}(X^N_1(T) - Z^N_{h,1}(T))$ for the coupled processes with $T = 0.3$, and varying values for $N$ and $h$.  The plots are consistent with the functional form
\begin{equation}\label{eq:functional_form}
	\textsf{Var}(X^N_1(t) - Z^N_{h,1}(t)) \approx C N^{-1} h,
\end{equation}
matching the bound arising from 
Theorem~\ref{thm:var}.
The best fit curve for the data, obtained by a least squares approximation and which is shown in each image, is $\textsf{Var}(X^N_1(t) - Z^N_{h,1}(t)) \approx 0.0408 \cdot N^{-1.0588} h^{1.0228}$.

In Figure \ref{fig:example_simple_TT}, we provide log-log plots of $\textsf{Var}(Z^N_{\ell,1}(T) - Z^N_{\ell -1,1}(T))$ for the coupled processes with $T = 0.3$, and varying values for $N$ and $h_\ell$.  These plots also follow the functional form of \eqref{eq:functional_form}, 
matching the bound arising from 
Corollary~\ref{thm:cor},
with best fit curve of  $\textsf{Var}(Z^N_{\ell,1}(T) - Z^N_{\ell -1,1}(T))\approx 0.1038 \cdot N^{-1.0279} h_\ell^{0.9845}$.

\begin{figure}
\centering
	\subfigure[Varying $N$ with $h=0.001$ fixed.]{\includegraphics[width=0.47\textwidth]{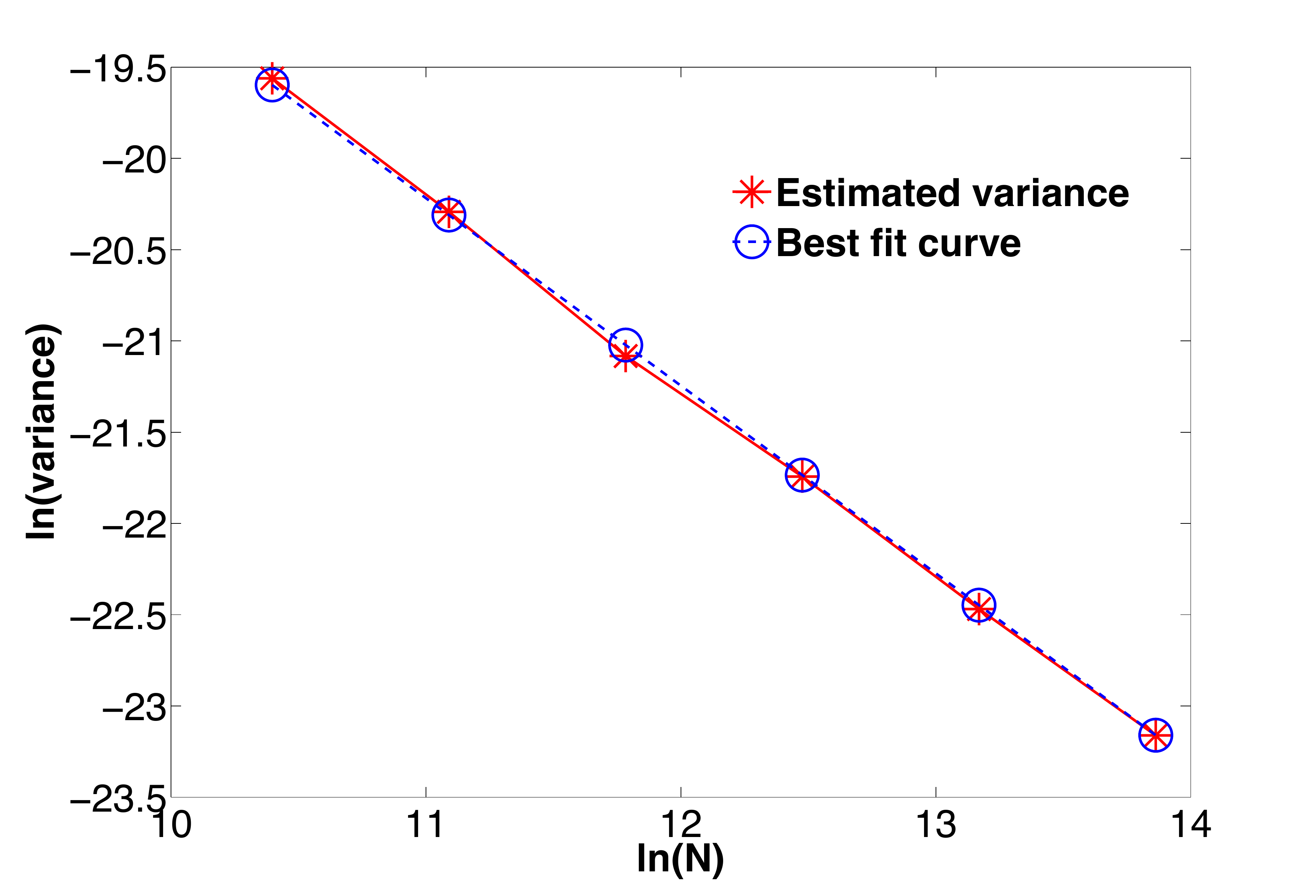}}\qquad
	\subfigure[Varying $h$ with $N = 10^6$ fixed.]{\includegraphics[width=0.47\textwidth]{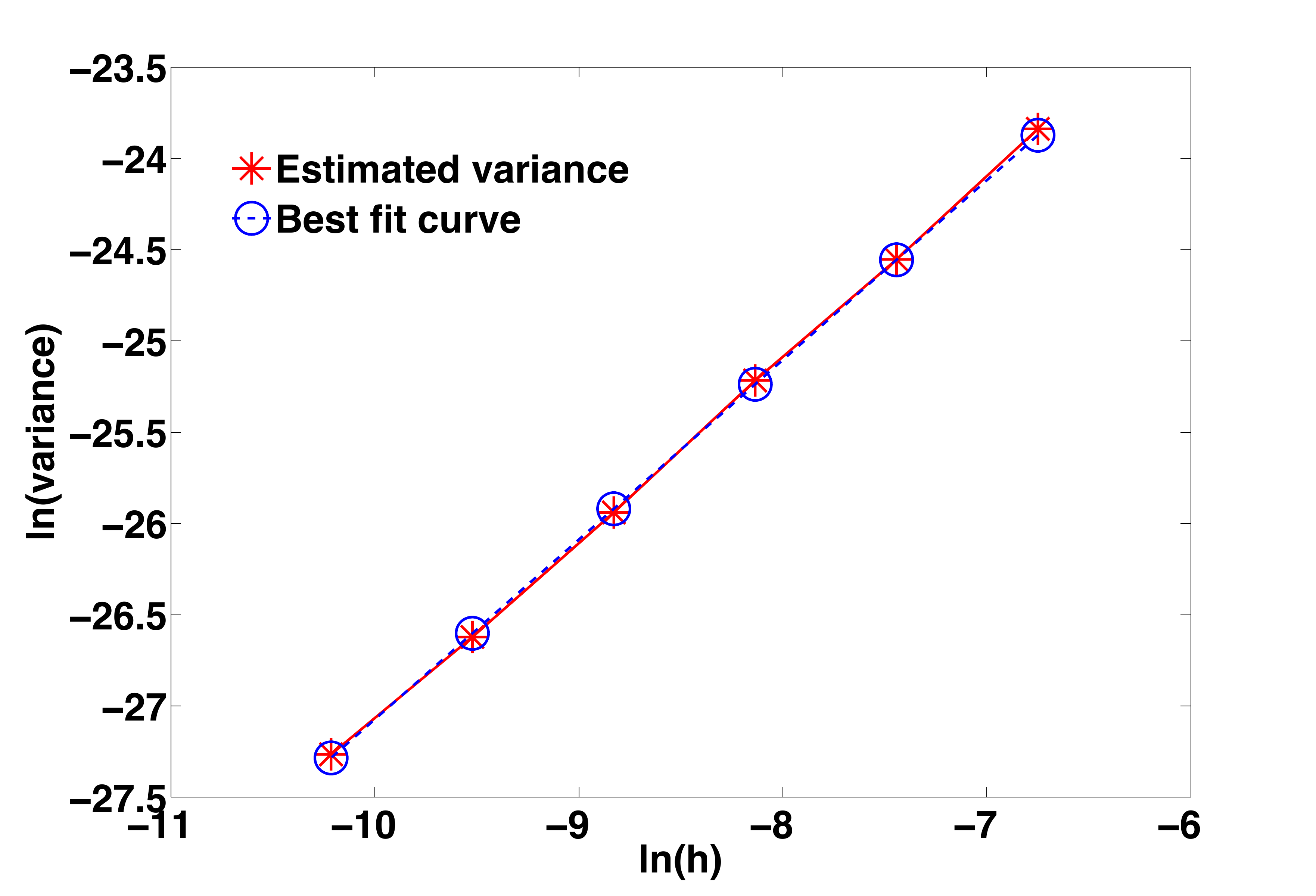}}
	\caption{Log-log plots of $\textsf{Var}(Z_{\ell,1}^N(t) - Z_{\ell-1,1}^N(t))$ for the model in Example \ref{example:simple}.  In (a), $h$ is held constant while $N$ is changed.  In (b), $N$ is fixed while $h$ is varied.  The best fit curve for all the data is overlain in the dashed blue line.}
	\label{fig:example_simple_TT}
\end{figure}

\label{example:simple}
\end{example}
 \section{Conclusions}
\label{sec:conc}

The main contribution of this work is to add further theoretical support
for multilevel Monte Carlo tau-leaping by developing new
complexity bounds that behave well
for large values of the system size parameter.
To do this, we took the novel step of directly
 estimating the variance between
 pairs of  paths, rather than proceeding via a mean-square
 convergence property.
We also provided numerical support showing our estimates for the variances are sharp.

Stochastic simulation of continuous time, discrete space, Markov chains
is a bottleneck across a range of application areas, and
there are many promising directions
for further study of multilevel Monte Carlo in this context. In particular,
specific instances of the very general
scaling considered here could be
used in order to develop more customized
strategies, and complexity bounds,
in suitable model classes; for example, where there is a
 known separation of scale.
 
 The analysis presented is valid for $\gamma \le 0$.  For the case $\gamma>0$, which is the regime of ``stiff'' systems, it is often possible to generate, via averaging techniques, a reduced model satisfying $\gamma \le 0$.  Taking this reduced model as the ``finest level'' in a MLMC framework is then a natural way to proceed in the construction of an efficient Monte Carlo method.  This procedure was carried out successfully in Section 9  of \cite{AndHigh2012} for an example of viral growth.

\vspace{.225in}

\noindent \textbf{\large Acknowledgements.}  We thank two anonymous reviewers for detailed comments on this work.  DFA was supported by NSF grants DMS-1009275 and DMS-1318832 and Army Research Office grant W911NF-14-1-0401.  DJH was supported by a Royal Society/Wolfson Research Merit Award and
a Royal Society/Leverhulme Trust Senior Fellowship.  YS was supported by NSF-DMS-1318832.

\appendix

\section{Technical Details from the Analysis}
\label{sec:app}

We provide here some technical lemmas which were used in Section \ref{sec:res}.

\begin{lemma}\label{lem:twovar_bound}
 \label{lem:orderv}
Suppose $X_1(s)$ and $X_2(s)$ are  stochastic processes on $\R^d$ and that   $x_1(s)$ and $x_2(s)$ are  deterministic processes on $\R^d$.  Further, suppose that
 \begin{align}
 \label{eq:orderv}
        \sup_{s \le T} \E\left[ |X_1(s)-x_1(s)|^2\right] \le \widehat{C}_1(N^{\gamma}T)N^{-\rho},\quad  \sup_{s \le T} \E \left[ |X_2(s)-x_2(s)|^2\right] \le \widehat{C}_2(N^{\gamma}T)N^{-\rho},
 \end{align}
 for some $\widehat{C}_1,\widehat{C}_2$ depending upon $N^{\gamma}T$.
   Assume that $u: \R^d \to \R$ is Lipschitz  with Lipschitz constant $L$. Then,
		 \[
         \sup_{s \le T}\textsf{Var} \left( \int_0^1  u(X_2(s) + r(X_1(s)-X_2(s)))dr\right)\le L^2 \max(\widehat{C}_1,\widehat{C}_2)N^{-\rho}.
		 \]
 \end{lemma}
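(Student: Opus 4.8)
The plan is to exploit the elementary fact that for any random variable $Y$ and any constant $c$, $\textsf{Var}(Y) \le \E[(Y-c)^2]$, and to choose $c$ to be the deterministic analogue of the quantity in question. Specifically, I would fix $s \le T$, write $W(s) \eqdef \int_0^1 u(X_2(s) + r(X_1(s)-X_2(s)))\,dr$, and compare it against the constant $w(s) \eqdef \int_0^1 u(x_2(s) + r(x_1(s)-x_2(s)))\,dr$. This gives immediately
\[
\textsf{Var}(W(s)) \le \E\!\left[\left(\int_0^1 \big[u(X_2(s)+r(X_1(s)-X_2(s))) - u(x_2(s)+r(x_1(s)-x_2(s)))\big]dr\right)^{\!2}\right].
\]

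Next I would pull the square inside the $dr$-integral using Jensen's inequality (since $dr$ is a probability measure on $[0,1]$), and then apply the Lipschitz bound on $u$ pointwise in $r$, obtaining
\[
\textsf{Var}(W(s)) \le L^2 \int_0^1 \E\Big[\big|(X_2(s)-x_2(s)) + r\big((X_1(s)-X_2(s)) - (x_1(s)-x_2(s))\big)\big|^2\Big]dr.
\]
The key algebraic observation is that the vector inside is the convex combination $(1-r)(X_2(s)-x_2(s)) + r(X_1(s)-x_1(s))$, so convexity of $v \mapsto |v|^2$ yields the bound $(1-r)|X_2(s)-x_2(s)|^2 + r|X_1(s)-x_1(s)|^2$. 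Integrating the factor $(1-r)$ and $r$ over $[0,1]$ each gives $1/2$, so
\[
\textsf{Var}(W(s)) \le \tfrac{L^2}{2}\Big(\E[|X_1(s)-x_1(s)|^2] + \E[|X_2(s)-x_2(s)|^2]\Big).
\]

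Finally I would invoke the hypothesis \eqref{eq:orderv}, which bounds each expectation by $\widehat C_i(N^\gamma T) N^{-\rho}$, and use $\tfrac12(\widehat C_1 + \widehat C_2) \le \max(\widehat C_1,\widehat C_2)$, then take the supremum over $s \le T$ to conclude. There is no real obstacle here: the only ``idea'' is the choice of the deterministic centering constant together with the convexity step; everything else is Jensen, Lipschitz, and arithmetic. If one wanted to be slightly sharper one could keep the factor $\tfrac12(\widehat C_1+\widehat C_2)$, but bounding by $\max$ suffices for the statement and keeps later bookkeeping clean.
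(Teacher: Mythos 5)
Your argument is correct and is essentially identical to the paper's proof: both center at the deterministic quantity (your $\textsf{Var}(Y)\le\E[(Y-c)^2]$ is the same step as the paper's subtraction of a constant inside the variance), then apply Jensen over $dr$, the Lipschitz bound, and convexity of $v\mapsto|v|^2$ before invoking \eqref{eq:orderv}. The only cosmetic difference is that you integrate the weights $r$ and $1-r$ to get $\tfrac12(\widehat C_1+\widehat C_2)$ before passing to the $\max$, whereas the paper bounds by $\max(\widehat C_1,\widehat C_2)$ pointwise in $r$; the conclusions agree.
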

  \begin{proof}
  First we know,
          \begin{align*}
        \textsf{Var} &\left( \int_0^1  u(X_2(s) + r(X_1(s)-X_2(s)))dr \right)\notag\\
		&= \textsf{Var} \left(\int_0^1   u(X_2(s) + r(X_1(s)-X_2(s)))- u(x_2(s) + r(x_1(s)-x_2(s))) dr\right)\notag\\
            &\leq \E \left(\int_0^1    u(X_2(s) + r(X_1(s)-X_2(s)))- u(x_2(s) + r(x_1(s)-x_2(s))) dr\right)^2\notag\\
            &\leq \int_0^1 \E [ u(X_2(s) + r(X_1(s)-X_2(s)))- u(x_2(s) + r(x_1(s)-x_2(s)))]^2 dr.
            \end{align*}
           Using that $u$ is Lipschitz, we  may continue
           \begin{align*}
           \textsf{Var} &\left( \int_0^1  u(X_2(s) + r(X_1(s)-X_2(s)))dr \right)\notag\\
            &\leq L^2\int_0^1 \E\left[ |X_2(s) + r(X_1(s)-X_2(s))-(x_2(s) + r(x_1(s)-x_2(s)))|^2\right] dr\notag\\
            &= L^2\int_0^1 \E \left[ |r(X_1(s)-x_1(s)) + (1-r)(X_2(s)-x_2(s))|^2 \right] dr\notag\\
            &\leq L^2\int_0^1 r \E\left[ |X_1(s)-x_1(s)|^2\right] + (1- r)\E \left[| X_2(s)-x_2(s)|^2 \right] dr\notag\\
            &\leq L^2\max(\widehat{C}_1,\widehat{C}_2)N^{-\rho},
        \end{align*}
        where the second to last inequality follows from convexity of the quadratic function, and the final inequality holds from applying \eqref{eq:orderv}.
  \end{proof}

\begin{lemma}\label{lem:Var_bound}
	Suppose that $A^{N,h}$ and $B^{N,h}$ are families of random variables determined by  scaling parameters $N$ and $h$.  Further, suppose that there are $C_1>0, C_2>0$ and $C_3>0$ such that  for all $N>0$ the following three conditions hold:
	\begin{enumerate}
		\item $\textsf{Var}(A^{N,h}) \le C_1 N^{-\rho}$ uniformly in $h$.
		\item $|A^{N,h}| \le C_2N^\gamma$ uniformly in $h$.
		\item $|\E [B^{N,h}]| \le C_3N^{\gamma} h$.
	\end{enumerate}
	 Then
	\begin{equation*}
		\textsf{Var}(A^{N,h} B^{N,h}) \le 3C_3^2C_1N^{-\rho}(N^{\gamma}h)^2+15C_2^2N^{2\gamma}\textsf{Var}(B^{N,h}).
	\end{equation*}
\end{lemma}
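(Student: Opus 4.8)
The plan is to split both factors into their means and fluctuations and expand. Write $\bar A \eqdef \E[A^{N,h}]$, $\tilde A \eqdef A^{N,h}-\bar A$, and likewise $\bar B \eqdef \E[B^{N,h}]$, $\tilde B \eqdef B^{N,h}-\bar B$. Then
\[
	A^{N,h}B^{N,h} = \bar A\,\bar B + \bar A\,\tilde B + \bar B\,\tilde A + \tilde A\,\tilde B,
\]
and since $\bar A\,\bar B$ is deterministic it does not affect the variance, so $\textsf{Var}(A^{N,h}B^{N,h}) = \textsf{Var}(\bar A\,\tilde B + \bar B\,\tilde A + \tilde A\,\tilde B)$. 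I would then apply the elementary inequality $\textsf{Var}(U+V+W)\le 3(\textsf{Var}(U)+\textsf{Var}(V)+\textsf{Var}(W))$, which follows from $(u+v+w)^2\le 3(u^2+v^2+w^2)$, to reduce matters to bounding the three pieces separately. This is where the factor $3$ in the statement comes from.

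For the first two pieces the hypotheses apply directly. Using $\textsf{Var}(cX)=c^2\textsf{Var}(X)$ for deterministic $c$, the second hypothesis ($|A^{N,h}|\le C_2 N^\gamma$, hence $\bar A^2\le C_2^2 N^{2\gamma}$) gives $\textsf{Var}(\bar A\,\tilde B) = \bar A^2\,\textsf{Var}(B^{N,h}) \le C_2^2 N^{2\gamma}\,\textsf{Var}(B^{N,h})$, while the third hypothesis ($|\bar B|\le C_3 N^\gamma h$) together with the first ($\textsf{Var}(A^{N,h})\le C_1 N^{-\rho}$) gives $\textsf{Var}(\bar B\,\tilde A) = \bar B^2\,\textsf{Var}(A^{N,h}) \le C_3^2 C_1\,N^{-\rho}(N^\gamma h)^2$. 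The remaining term $\tilde A\,\tilde B$ is the one that needs slightly more care, since it is a product of two genuinely random factors. The key point is that the second hypothesis controls $A^{N,h}$ pointwise, not merely in variance, so $|\tilde A| = |A^{N,h}-\E[A^{N,h}]|\le 2C_2 N^\gamma$; then $\textsf{Var}(\tilde A\,\tilde B)\le \E[\tilde A^2\tilde B^2]\le 4C_2^2 N^{2\gamma}\,\E[\tilde B^2] = 4C_2^2 N^{2\gamma}\,\textsf{Var}(B^{N,h})$. Adding the three contributions with the factor $3$ produces exactly $3C_3^2 C_1\,N^{-\rho}(N^\gamma h)^2 + (3+12)C_2^2 N^{2\gamma}\,\textsf{Var}(B^{N,h})$, which is the claimed bound.

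I do not anticipate a genuine obstacle here: the only mildly delicate step is the fluctuation–fluctuation cross term $\tilde A\,\tilde B$, and the observation that assumption (2) is a pointwise bound (so $\tilde A$ can be pulled out of the expectation) makes even that routine. Everything else is bookkeeping with $\textsf{Var}(cX)=c^2\textsf{Var}(X)$ and $\textsf{Var}(Z)\le\E[Z^2]$.
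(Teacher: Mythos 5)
Your proof is correct and follows essentially the same route as the paper's: the same mean--fluctuation decomposition of the product, the same factor-of-$3$ inequality, and the same pointwise bound $|\tilde A|\le 2C_2N^\gamma$ to handle the fluctuation--fluctuation term, yielding the identical constants $3C_3^2C_1$ and $(3+12)C_2^2$.
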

 \begin{proof}
Via a direct expansion, the variance of the product can be represented in the following manner
 \begin{align*}
 \textsf{Var}(A^{N,h} B^{N,h}) &= \E[ (\E [B^{N,h}])(A^{N,h}-\E [ A^{N,h}])+(\E [A^{N,h}])(B^N-\E [B^{N,h}])\\
 &\hspace{.2in}
 +(A^{N,h}-\E[ A^{N,h}])(B^{N,h}-\E [B^{N,h}]) -\E[(A^{N,h}-\E[ A^{N,h}])(B^{N,h}-\E [B^{N,h}])]]^2.
  \end{align*}
Using the basic bound $(a + b + c)^2 \le 3a^2 + 3 b^2 + 3 c^2$, we have
\begin{align*}
	\textsf{Var}(A^{N,h} B^{N,h}) \le&  3(\E [B^{N,h}])^2\textsf{Var}(A^{N,h}) +  3(\E[ A^{N,h}])^2\textsf{Var}(B^{N,h})\\
 &\hspace{.1in}+3\textsf{Var}((A^{N,h}-\E [A^{N,h}])(B^{N,h}-\E [B^{N,h}])).
\end{align*}
Using our assumptions in the statement of the lemma, the following two inequalities are immediate
\begin{align}\label{eq:appendix_inequ1}
	3(\E[ B^{N,h}])^2\textsf{Var}(A^{N,h}) &\le 3 C_3^2 C_1 N^{-\rho} (N^\gamma h)^2,
\end{align}
and
\begin{align}\label{eq:appendix_inequ2}
	3(\E [A^{N,h}])^2\textsf{Var}(B^{N,h}) &\le 3C_2^2 N^{2\gamma} \textsf{Var}(B^{N,h}).
\end{align}
For the final term we  bound the variance by the second moment to achieve
\begin{align}\label{eq:appendix_inequ3}
\begin{split}
	3\textsf{Var}((A^{N,h}-\E [A^{N,h}])(B^{N,h}-\E [B^{N,h}])) &\le 3\E((A^{N,h}-\E [A^{N,h}])(B^{N,h}-\E [B^{N,h}]))^2\\
	&\le 12 C_2^2 N^{2\gamma} \textsf{Var}(B^{N,h}).
\end{split}
\end{align}
Combining \eqref{eq:appendix_inequ1}, \eqref{eq:appendix_inequ2}, and \eqref{eq:appendix_inequ3} gives the desired result.
%
 \end{proof}

\begin{lemma}\label{lem:Var_int_bound}
Let $Q(s)$ be a stochastic process for which $\sup_{s \in [a,b]} \textsf{Var(Q(s))} < \infty$.  Then
\begin{align*}
\begin{split}
	\textsf{Var}\left( \int_a^b Q(s) ds\right)	 &\le (b-a) \int_a^b \textsf{Var}(Q(s)) ds.
\end{split}	
\end{align*}

\end{lemma}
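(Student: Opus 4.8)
The plan is to center $Q$ and reduce the bound to a pointwise second-moment estimate via Jensen's inequality. Write $\bar Q(s) \eqdef Q(s) - \E[Q(s)]$, so that by the definition of variance
\[
	\textsf{Var}\left(\int_a^b Q(s)\,ds\right) = \E\left[\left(\int_a^b \bar Q(s)\,ds\right)^2\right].
\]
Viewing $\tfrac{1}{b-a}\int_a^b(\cdot)\,ds$ as integration against the normalized Lebesgue measure on $[a,b]$ and applying Jensen's inequality to the convex map $x\mapsto x^2$, we obtain, for each sample path,
\[
	\left(\int_a^b \bar Q(s)\,ds\right)^2 = (b-a)^2\left(\frac{1}{b-a}\int_a^b \bar Q(s)\,ds\right)^2 \le (b-a)\int_a^b \bar Q(s)^2\,ds.
\]

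The next step is to take expectations of both sides. The hypothesis $\sup_{s\in[a,b]}\textsf{Var}(Q(s))<\infty$ ensures $\int_a^b \E[\bar Q(s)^2]\,ds = \int_a^b\textsf{Var}(Q(s))\,ds<\infty$, so, the integrand being nonnegative, Tonelli's theorem justifies interchanging the expectation with the $ds$-integral on the right. This gives
\[
	\textsf{Var}\left(\int_a^b Q(s)\,ds\right) \le (b-a)\int_a^b \E[\bar Q(s)^2]\,ds = (b-a)\int_a^b\textsf{Var}(Q(s))\,ds,
\]
which is the desired inequality.

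An essentially equivalent alternative is to expand the square as a double integral, $\left(\int_a^b\bar Q(s)\,ds\right)^2 = \int_a^b\int_a^b \bar Q(s)\bar Q(t)\,ds\,dt$, use Fubini to move the expectation inside, bound $\textsf{Cov}(Q(s),Q(t))\le\sqrt{\textsf{Var}(Q(s))\,\textsf{Var}(Q(t))}$ by Cauchy--Schwarz, and then finish with $2\sqrt{xy}\le x+y$ and the symmetry in $s$ and $t$. In either version the only point needing any care is the interchange of expectation and integration, which presumes the joint measurability of $(s,\omega)\mapsto Q(s,\omega)$ that is standard for the processes at hand; this interchange is immediate from the uniform variance bound, so there is really no obstacle and the substance of the lemma is the single Jensen step. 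I would present the Jensen argument, as it is the shortest.
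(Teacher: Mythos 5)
Your proof is correct and follows essentially the same route as the paper's: center the process, apply the Cauchy--Schwarz/Jensen bound $\left(\int_a^b g(s)\,ds\right)^2 \le (b-a)\int_a^b g(s)^2\,ds$ pathwise, and exchange expectation with the $ds$-integral. Your explicit attention to the Tonelli/measurability step is a minor refinement the paper leaves implicit, but the substance is identical.
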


\begin{proof}
The proof is straightforward.
\begin{align*}
	\textsf{Var}\left( \int_a^b Q(s) ds\right) &= \E \left( \int_a^b Q(s) ds - \E \left[\int_a^b Q(s) ds\right]\right)^2 = \E \left( \int_a^b (Q(s) - \E [Q(s)]) ds \right)^2\\
	&\le (b-a) \int_a^b \E\left[  \left( Q(s) - \E Q(s)\right)^2\right] ds=(b-a) \int_a^b \textsf{Var}(Q(s))ds.\qedhere
\end{align*}
\end{proof}

 \begin{lemma}\label{lem:taylor}
 	Let $f:\R^d \to \R$ have continuous first derivative.  Then, for any $x,y\in \R^d$,
	\[
		f(x) = f(y) + \int_0^1 \nabla f(sx + (1-s)y) ds \cdot (x-y).
	\]
 \end{lemma}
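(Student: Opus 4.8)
The plan is to reduce the multivariate identity to the one–variable fundamental theorem of calculus by restricting $f$ to the line segment joining $y$ and $x$. First I would introduce the auxiliary function $g:[0,1]\to\R$ defined by $g(s) = f(y + s(x-y))$, and note the algebraic identity $y + s(x-y) = sx + (1-s)y$, so that the claimed formula is precisely the statement
\[
 g(1) = g(0) + \int_0^1 \nabla f\big(sx+(1-s)y\big)\cdot(x-y)\,ds .
\]

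Next I would check that $g$ is continuously differentiable on $[0,1]$. The map $s \mapsto y + s(x-y)$ is affine, hence smooth, and by hypothesis $f$ has a continuous first derivative, so the chain rule applies and yields $g'(s) = \nabla f\big(y+s(x-y)\big)\cdot(x-y)$. This is a continuous function of $s$, since it is the composition of the continuous map $\nabla f$ with an affine map, followed by an inner product with the fixed vector $x-y$; in particular $g'$ is Riemann integrable on $[0,1]$.

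Then I would invoke the fundamental theorem of calculus for $g$: since $g\in C^1([0,1])$,
\[
 g(1) - g(0) = \int_0^1 g'(s)\,ds = \int_0^1 \nabla f\big(y+s(x-y)\big)\cdot(x-y)\,ds .
\]
Because $g(1) = f(x)$, $g(0) = f(y)$, and $y+s(x-y) = sx+(1-s)y$, rearranging gives exactly $f(x) = f(y) + \int_0^1 \nabla f(sx+(1-s)y)\,ds\cdot(x-y)$, using that the constant vector $x-y$ may be pulled outside the integral.

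There is no substantive obstacle in this argument; the only point that deserves a word of justification is that the stated regularity — $f$ having a \emph{continuous} first derivative — is exactly what makes both the chain rule computation of $g'$ and the integrability of the integrand legitimate, so that the fundamental theorem of calculus can be applied with no further hypotheses. The remaining manipulations are routine.
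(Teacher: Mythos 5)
Your argument is correct and is essentially identical to the paper's proof: both define the same auxiliary function along the segment (the paper's $H(t)=f(tx+(1-t)y)$ is your $g$), compute its derivative by the chain rule, and apply the fundamental theorem of calculus. Your version merely spells out the regularity bookkeeping in slightly more detail.
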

 \begin{proof}
 	Let $H(t) = f(tx + (1-t)y)$.  Then $H'(t) = \nabla f(tx + (1-t)y)\cdot (x-y),$
	and by the fundamental theorem of calculus, $H(1) = H(0) + \int_0^1 H'(s)ds,$
	which is equivalent to the statement of the lemma.
 \end{proof}

 \bibliographystyle{siam}

\bibliography{MLMC_TAU}

\end{document}